\newcommand{\keywords}[1]{\par\addvspace\baselineskip
\noindent\keywordname\enspace\ignorespaces#1}
\begin{document}

\mainmatter  

\title{A new class of interpolatory $L$-splines\\
with adjoint end conditions}

\titlerunning{A new class of interpolatory $L$-splines}

%
%
\author{Aurelian Bejancu%
\thanks{Corresponding author.}%
\and Reyouf S. Al-Sahli}
\authorrunning{A new class of interpolatory $L$-splines}

\institute{Kuwait University, Department of Mathematics,\\
PO Box 5969, Safat 13060, Kuwait\\
\mailsa
}

%
%

\toctitle{A new class of interpolatory $L$-splines}
\tocauthor{Aurelian Bejancu and Reyouf S. Al-Sahli}
\maketitle

\begin{abstract}
A thin plate spline for interpolation of smooth transfinite data
prescribed along concentric circles was recently proposed by Bejancu, using
Kounchev's polyspline method. The construction of the new `Beppo Levi
polyspline' surface reduces, via separation of variables, to that of a
countable family of univariate $L$-splines, indexed by the frequency integer
$k$. This paper establishes the existence, uniqueness and variational
properties of the `Beppo Levi $L$-spline' schemes corresponding to non-zero
frequencies $k$. In this case, the resulting $L$-spline end conditions are
formulated in terms of \emph{adjoint} differential operators, unlike the usual
`natural' $L$-spline end conditions, which employ identical operators at both
ends. Our $L$-spline error analysis leads to an $L^{2}$-error bound for
transfinite surface interpolation with Beppo Levi polysplines.
\keywords{interpolation, $L$-spline, Beppo Levi polyspline, approximation order}
\end{abstract}

\section{Introduction}

The \emph{thin plate spline} (TPS) interpolant for scattered data 
was defined by Duchon \cite{Du76} as the unique minimizer of the squared
seminorm%
\begin{equation}
\left\Vert F\right\Vert _{BL}^{2}:=\int\!\!\int_{\mathbb{R}^{2}}\left(
\left\vert F_{xx}\right\vert ^{2}+2\left\vert F_{xy}\right\vert ^{2}%
+\left\vert F_{yy}\right\vert ^{2}\right)  \mathrm{d}x\,\mathrm{d}y,
\label{eq:Duchon}%
\end{equation}
subject to $F$ taking prescribed values at a finite number of scattered
locations. The minimization takes place in the \emph{Beppo Levi} space of
continuous functions $F$ with generalized second-order partial derivatives in
$L^{2}\left(  \mathbb{R}^{2}\right)  $.

Recently, Bejancu \cite{MMA12} proposed a new type of TPS surface, passing
through several continuous curves prescribed along concentric circles. The new
surface minimizes, for $F\in C^{2}\left(  \mathbb{R}^{2}\backslash\left\{
0\right\}  \right)  $, the polar coordinate version of (\ref{eq:Duchon}):%
\begin{equation}
\left\Vert f\right\Vert _{BL}^{2}:=\int_{0}^{\infty}\!\int_{-\pi}^{\pi
}\left\{  \left\vert f_{rr}\right\vert ^{2}+2\left\vert \frac{f_{\theta}%
}{r^{2}}-\frac{f_{\theta r}}{r}\right\vert ^{2}+\left\vert \frac
{f_{\theta\theta}}{r^{2}}+\frac{f_{r}}{r}\right\vert ^{2}\right\}
r\,\mathrm{d}\theta\,\mathrm{d}r, \label{eq:Du-polar}%
\end{equation}
where $f\left(  r,\theta\right)  :=F\left(  r\cos\theta,r\sin\theta\right)  $
denotes the polar form of $F$. Similar surfaces for \emph{transfinite}
interpolation have also been studied in \cite{JAT08,CA11} in the case of
continuous periodic data prescribed along parallel lines or hyperplanes (see
also the survey \cite{IMA13}).

The `transfinite TPS' surfaces belong to the class of multivariate
\emph{polysplines} introduced by Kounchev \cite{Ku01}. In the context of data
prescribed on concentric circles $r=r_{j}$, $j\in\left\{  1,\ldots,n\right\}
$, with $0<r_{1}<\ldots<r_{n}$, let us denote $\rho:=\left\{  r_{1}%
,\ldots,r_{n}\right\}  $ and $\Omega:=\left\{  \left(  r,\theta\right)
:r_{1}\leq r\leq r_{n},\ -\pi\leq\theta\leq\pi\right\}  $. A function
$S:\Omega\rightarrow\mathbb{R}$ is termed a \emph{biharmonic polyspline }on
annuli determined by $\rho$ if two conditions hold: first, $S$ and its polar
form $s$ are piecewise biharmonic, \emph{i.e.}%
\[
\left(  \partial_{xx}+\partial_{yy}\right)  ^{2}S\left(  x,y\right)  =\left(
\partial_{rr}+r^{-1}\partial_{r}+r^{-2}\partial_{\theta\theta}\right)
^{2}s\left(  r,\theta\right)  =0,
\]
on each annulus $r_{j}<r<r_{j+1}$, $-\pi\leq\theta\leq\pi$, for $1\leq j\leq
n-1$; and second, $S\in C^{2}\left(  \Omega\right)  $, \emph{i.e.}
neighbouring pieces join up $C^{2}$-continuously across the interface circles.
For sufficiently smooth periodic data functions $u,v,\mu_{j}:\left[  -\pi
,\pi\right]  \rightarrow\mathbb{R}$, $1\leq j\leq n$, Kounchev proved that
such a polyspline surface is uniquely determined by transfinite interpolation
conditions%
\begin{equation}
s\left(  r_{j},\theta\right)  =\mu_{j}\left(  \theta\right)  ,\quad
\forall\theta\in\left[  -\pi,\pi\right]  ,\ \forall j\in\left\{
1,\ldots,n\right\}  , \label{eq:transfinite}%
\end{equation}
together with boundary conditions $\partial_{r}s\left(  r_{1},\theta\right)
=u\left(  \theta\right)  $ and $\partial_{r}s\left(  r_{n},\theta\right)
=v\left(  \theta\right)  $, $\forall\theta\in\left[  -\pi,\pi\right]  $. He
also extended this result to polysplines of higher orders and more general
interface configurations in $\mathbb{R}^d$. In the case of \emph{cardinal} 
interpolation at the bi-infinite set of hyperspheres of radii $r=e^j$, 
$j\in\mathbb{Z}$, Kounchev and Render \cite{KR05} constructed
polysplines that satisfy growth conditions as $r\rightarrow 0$ and 
$r\rightarrow\infty$.

In \cite{MMA12}, Bejancu proposed a global polyspline $S:\mathbb{R}%
^{2}\rightarrow\mathbb{R}$ for which boundary conditions on the above extreme
circles $r=r_{1}$ and $r=r_{n}$ are replaced by the requirement that the polar
Beppo Levi energy (\ref{eq:Du-polar}) is finite for $f:=s$. This \emph{Beppo
Levi polyspline} has two additional biharmonic pieces over the extreme annuli
$0<r<r_{1}$ and $r>r_{n}$, such that $S\in C^{2}\left(  \mathbb{R}%
^{2}\backslash\{0\}\right)  $. The new surface is automatically continuous at
$0$, but its partial derivatives can have a singularity at $0$.

For sufficiently smooth data, it turns out that there exists a one-parameter
family of such Beppo Levi polysplines on annuli determined by $\rho$, each
satisfying the transfinite interpolation conditions (\ref{eq:transfinite}).
Two surfaces $S^{A}$ and $S^{B}$ of this family are uniquely determined in
\cite[Theorem~1]{MMA12} by the following additional conditions: $S^{A}$ takes
an arbitrarily prescribed value at $0$, while $S^{B}$ is biharmonic at $0$
(hence, non-singular). Both $S^{A}$ and $S^{B}$ are then characterized as
genuine TPS surfaces, \emph{i.e.}\ minimizers of (\ref{eq:Du-polar}), subject
to their respective interpolation conditions.

Following the method of separation of variables used by Kounchev \cite{Ku01},
the construction of the Beppo Levi polysplines $S^{A}$, $S^{B}$ is obtained in
\cite[section~4]{MMA12} via the absolutely convergent Fourier representation
in polar form%
\begin{equation}
s\left(  r,\theta\right)  =\sum_{k\in\mathbb{Z}}\widehat{s}_{k}\left(
r\right)  e^{ik\theta},\quad\left(  r,\theta\right)  \in\left[  0,\infty
\right)  \times\left[  -\pi,\pi\right]  . \label{eq:F-series}%
\end{equation}
For each frequency $k$, the amplitude coefficient $\widehat{s}_{k}$ of this
representation is a univariate $L_{k}$\emph{-spline} for an ordinary
differential operator operator $L_{k}$, as described in the next section.
Moreover, the form of $\widehat{s}_{k}$ on the extreme intervals $\left(
0,r_{1}\right)  $ and $\left(  r_{n},\infty\right)  $ is determined by the
condition that the corresponding Plancherel component of the Beppo Levi energy
(\ref{eq:Du-polar}) of $s$ is finite.

The present paper studies the class of such \emph{Beppo Levi }$L_{k}%
$\emph{-splines} corresponding to non-zero frequencies $k$ (see section 2). In
this case, the restrictions satisfied by $\widehat{s}_{k}$ on the extreme
intervals exhibit a twisted symmetry, expressed in terms of adjoint
differential operators. Different features appear in the radial case $k=0$,
treated in the companion paper \cite{RTPS}, which is connected to Rabut's work
on radially symmetric thin plate splines \cite{Rab96}.

In section 3, we prove the existence, uniqueness and variational
characterization of interpolation schemes with Beppo Levi $L_{k}$-splines, as
required by the construction of \cite{MMA12}. A part of these results,
corresponding to $\left\vert k\right\vert \geq2$, has first been obtained in
the MSc thesis \cite{MSc}. For $\left\vert k\right\vert \geq2$, we also provide a 
linear representation of Beppo Levi $L_{k}$-splines in terms of dilates of a basis 
function related to the generalized Whittle-Mat\'{e}rn-Sobolev 
kernels introduced by Bozzini, Rossini, and Schaback \cite{BRS13}.
Further, in section 4, we apply an error analysis
of the Beppo Levi $L_{k}$-spline schemes to establish the $L^{2}%
$-approximation order $O\left(  h^{2}\right)  $ for transfinite surface
interpolation with Beppo Levi polysplines on annuli, where $h$ is the maximum
distance between successive interface circles. The extension of this work to
higher order Beppo Levi polysplines on annuli and their $L$-spline Fourier
coefficients will be addressed in a separate paper.

\section{Preliminaries}

\subsection{Energy spaces}

For each $r\geq0$, define the Fourier coefficients of $f\left(  r,\theta
\right)  $ with respect to $\theta$ by%
\begin{equation}
\widehat{f}_{k}\left(  r\right)  :=\frac{1}{2\pi}\int_{-\pi}^{\pi}%
e^{-ik\theta}f\left(  r,\theta\right)  d\theta,\quad k\in\mathbb{Z}.
\label{eq:F-coeffs}%
\end{equation}
The following observation shows the effect on $\widehat{f}_{k}$ of the
condition that the polar Beppo Levi integral (\ref{eq:Du-polar}) is finite.
Namely, if $f$ is the polar form of $F\in C^{2}\left(  \mathbb{R}%
^{2}\backslash\left\{  0\right\}  \right)  $, then $\widehat{f}_{k}\in
C^{2}\left(  0,\infty\right)  $ and Plancherel's formula implies the identity
\cite[(3.2)]{MMA12}:%
\begin{equation}
\left\Vert f\right\Vert _{BL}^{2}=2\pi\sum_{k\in\mathbb{Z}}\left\Vert
\widehat{f}_{k}\right\Vert _{k}^{2}, \label{eq:Plancherel}%
\end{equation}
where, for each $k\in\mathbb{Z}$, we denote
\begin{equation}
\left\Vert \psi\right\Vert _{k}^{2}:=\int_{0}^{\infty}\left\{  \left\vert
\frac{\mathrm{d}^{2}\psi}{\mathrm{d}r^{2}}\right\vert ^{2}+2k^{2}\left\vert
\frac{\psi}{r^{2}}-\frac{1}{r}\frac{\mathrm{d}\psi}{\mathrm{d}r}\right\vert
^{2}+\left\vert k^{2}\frac{\psi}{r^{2}}-\frac{1}{r}\frac{\mathrm{d}\psi
}{\mathrm{d}r}\right\vert ^{2}\right\}  r\,\mathrm{d}r. \label{eq:k-norm}%
\end{equation}

Let $AC_{\mathrm{loc}}\left(  0,\infty\right)  $ be the vector space of
functions $\psi:\left(  0,\infty\right)  \rightarrow\mathbb{C}$ that are
absolutely continuous on any interval $\left[  a,b\right]  $, $0<a<b<\infty$.
We denote by $\Lambda_{1}$ the vector space of functions 
$\psi\in C^{1}\left(  0,\infty\right)$ with 
$\psi^{\prime}\in AC_{\mathrm{loc}}\left(  0,\infty\right)  $, such that
$r^{1/2}\psi^{\prime\prime}$ and $r^{-1/2}\psi^{\prime}-r^{-3/2}\psi$ belong
to $L^{2}\left(  0,\infty\right)  $. Also, by $\Lambda_{2}$ we denote the vector 
space of functions $\psi\in C^{1}\left(  0,\infty\right)  $ with $\psi^{\prime}\in
AC_{\mathrm{loc}}\left(  0,\infty\right)  $, such that $r^{1/2}\psi
^{\prime\prime}$, $r^{-1/2}\psi^{\prime}$, and $r^{-3/2}\psi$ all belong to
$L^{2}\left(  0,\infty\right)  $. Note that $\left\Vert \cdot\right\Vert _{k}$
is a norm on $\Lambda_{2}$ for $\left\vert k\right\vert \geq2$ and a semi-norm 
on $\Lambda_{1}$ for $k=\pm1$. The results of section 3 employ the following 
properties of functions from the spaces $\Lambda_{1}$ and $\Lambda_{2}$.

\begin{lemma}
\label{le:aux}\emph{(i)} If $\psi\in\Lambda_{2}$, there exist non-negative
constants $C_{\psi}$ and $\widetilde{C}_{\psi}$, such that%
\begin{equation}%
\begin{array}
[c]{l}%
\left\vert \psi\left(  r\right)  \right\vert \leq C_{\psi}\left(
r^{3/2}+r\left\vert 1-r\right\vert ^{1/2}\right)  ,\\
\left\vert \psi^{\prime}\left(  r\right)  \right\vert \leq\widetilde{C}_{\psi
}\left(  r^{1/2}+\left\vert 1-r\right\vert ^{1/2}\right)  ,
\end{array}
\quad\forall r>0. \label{eq:aux2}%
\end{equation}

\emph{(ii)} If $\psi\in\Lambda_{1}$, there exist non-negative constants
$C_{\psi}$ and $\widetilde{C}_{\psi}$, such that%
\begin{equation}%
\begin{array}
[c]{l}%
\left\vert \psi\left(  r\right)  \right\vert \leq C_{\psi}r\left(
1+\left\vert \ln r\right\vert ^{1/2}\right)  ,\\
\left\vert \psi^{\prime}\left(  r\right)  \right\vert \leq\widetilde{C}_{\psi
}\left(  1+\left\vert \ln r\right\vert ^{1/2}\right)  ,
\end{array}
\quad\forall r>0. \label{eq:aux1}%
\end{equation}

\end{lemma}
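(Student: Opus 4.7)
The plan is to prove both estimates by direct integration and Cauchy--Schwarz, exploiting the weighted $L^{2}$-conditions defining $\Lambda_{2}$ and $\Lambda_{1}$.

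For part (i), I would begin by factoring $\psi'(t)=t^{1/2}\cdot t^{-1/2}\psi'(t)$ in $\psi(r)-\psi(a)=\int_{a}^{r}\psi'(t)\,dt$ and applying Cauchy--Schwarz: this produces $|\psi(r)-\psi(a)|\le((r^{2}-a^{2})/2)^{1/2}\|r^{-1/2}\psi'\|_{L^{2}}$, ensuring $\psi(0^{+})$ exists; a nonzero limit would be incompatible with $r^{-3/2}\psi\in L^{2}$ near zero, so $\psi(0^{+})=0$. Letting $a\downarrow0$ yields $|\psi(r)|\le(r/\sqrt{2})\|r^{-1/2}\psi'\|_{L^{2}}$, and the elementary inequality $r^{1/2}+|1-r|^{1/2}\ge1$ on $[0,\infty)$ gives $r\le r^{3/2}+r|1-r|^{1/2}$, delivering the $\psi$-bound in (\ref{eq:aux2}). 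The derivative bound rests on the identity
\[
\frac{d}{dt}\bigl(\psi'(t)\bigr)^{2}=2\bigl(t^{-1/2}\psi'(t)\bigr)\bigl(t^{1/2}\psi''(t)\bigr),
\]
which I would integrate from $1$ to $r$ and estimate globally by Cauchy--Schwarz to obtain $(\psi'(r))^{2}\le(\psi'(1))^{2}+2\|r^{-1/2}\psi'\|_{L^{2}}\|r^{1/2}\psi''\|_{L^{2}}$. Thus $|\psi'|$ is bounded by a constant $M$, and since $r^{1/2}+|1-r|^{1/2}\ge1$ we conclude $|\psi'(r)|\le M(r^{1/2}+|1-r|^{1/2})$.

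For part (ii), only the combined quantity $r^{-1/2}\psi'-r^{-3/2}\psi$ is $L^{2}$-controlled, so I would introduce $\eta(r):=r^{-1}\psi(r)$ and use the algebraic identities $r^{-1/2}\psi'-r^{-3/2}\psi=r^{1/2}\eta'$ and $r^{3/2}\eta''=r^{1/2}\psi''-2r^{1/2}\eta'$, whose left-hand sides therefore lie in $L^{2}(0,\infty)$. Applying the derivative-of-square technique of part (i) to $(r\eta'(r))^{2}$, via
\[
\frac{d}{dr}\bigl(r\eta'(r)\bigr)^{2}=2\bigl(r^{1/2}\eta'\bigr)^{2}+2\bigl(r^{1/2}\eta'\bigr)\bigl(r^{3/2}\eta''\bigr),
\]
produces a uniform bound $|r\eta'(r)|\le M$. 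In parallel, Cauchy--Schwarz on $\eta(r)-\eta(1)=\int_{1}^{r}t^{-1/2}\cdot t^{1/2}\eta'(t)\,dt$ yields $|\eta(r)|\le|\eta(1)|+|\ln r|^{1/2}\|r^{1/2}\eta'\|_{L^{2}}$. Combining via $\psi=r\eta$ and $\psi'=\eta+r\eta'$ produces the two bounds in (\ref{eq:aux1}).

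The main obstacle I anticipate is the pointwise control of $\psi'$ (respectively $r\eta'$), which does not follow from any single $L^{2}$-hypothesis on its own: the key observation is that differentiating the square produces a product of two factors whose $L^{2}$-norms are independently controlled, so that Cauchy--Schwarz couples them into a uniform $L^{\infty}$-bound. Everything else is essentially routine once one identifies the auxiliary variable $\eta=\psi/r$ for part (ii) and the elementary inequality $r^{1/2}+|1-r|^{1/2}\ge1$ for part (i).
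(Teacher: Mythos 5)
Your proof is correct, but for three of the four estimates it takes a genuinely different route from the paper's. The paper handles all four bounds by one uniform device: apply the Leibniz--Newton formula to a suitably weighted function ($t^{-3/2}\psi$, $t^{-1/2}\psi'$, $t^{-1}\psi$, or $\psi'$), anchored at $t=1$, and split the integrand so that Cauchy--Schwarz against the leftover weight $t^{-1}$ (resp.\ $t^{-1/2}$) produces exactly the factor $|1-r^{-1}|^{1/2}$ (resp.\ $|\ln r|^{1/2}$) appearing in (\ref{eq:aux2}) and (\ref{eq:aux1}); no boundary behaviour at $0$ is invoked. You instead anchor the first bound of (i) at $r=0$, using $r^{-3/2}\psi\in L^{2}$ to force $\psi(0^{+})=0$, and you obtain both derivative bounds from the derivative-of-the-square trick, which couples two independently controlled weighted $L^{2}$-quantities into a uniform $L^{\infty}$-bound. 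As a result, in part (i) you actually prove the stronger statements $|\psi(r)|\le Cr$ and $|\psi'(r)|\le M$ for all $r>0$, and recover (\ref{eq:aux2}) from the elementary inequality $r^{1/2}+|1-r|^{1/2}\ge 1$; only your first bound in (ii) coincides with the paper's computation, and your second bound in (ii) goes through $\psi'=\eta+r\eta'$ rather than integrating $t^{1/2}\psi''$ directly (which is simpler, since $r^{1/2}\psi''\in L^{2}$ is already a hypothesis of $\Lambda_{1}$). Your approach buys sharper pointwise information in part (i); the paper's buys uniformity of argument and yields the $|1-r|^{1/2}$ and $|\ln r|^{1/2}$ factors in the exact form used later (e.g.\ $\psi_{1}(r)=O(r\left\vert \ln r\right\vert^{1/2})$ for $|k|=1$). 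One cosmetic fix: $\psi$ is complex-valued, so your differentiation identities should be written for squared moduli, e.g.\ $\frac{d}{dt}|\psi'|^{2}=2\,\mathrm{Re}\left(\overline{\psi'}\,\psi''\right)$; the estimates are unaffected.
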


\begin{proof}
(i) For each $r>0$, we use the Leibniz-Newton formula%
\begin{eqnarray*}
r^{-3/2}\psi\left(  r\right)  -\psi\left(  1\right)   &  = & \int_{1}^{r}\left[
t^{-3/2}\psi\left(  t\right)  \right]  ^{\prime}\mathrm{d}t\\
&  = & \int_{1}^{r}\left[  t^{-3/2}\psi^{\prime}\left(  t\right)  -\frac{3}%
{2}t^{-5/2}\psi\left(  t\right)  \right]  \mathrm{d}t.
\end{eqnarray*}
Via Cauchy-Schwarz, the last integral is bounded above in modulus
by%
\begin{eqnarray*}
\lefteqn{  \left\vert \int_{1}^{r}t^{-1}\left[  t^{-1/2}\psi^{\prime}\left(  t\right)
\right]  \mathrm{d}t\right\vert +\frac{3}{2}\left\vert \int_{1}^{r}%
t^{-1}\left[  t^{-3/2}\psi\left(  t\right)  \right]  \mathrm{d}t\right\vert } \\
&  \leq & \left\vert \int_{1}^{r}t^{-2}\mathrm{d}t\right\vert ^{\frac{1}{2}%
}\left\{  \left\vert \int_{1}^{r}\left\vert t^{-1/2}\psi^{\prime}\left(
t\right)  \right\vert ^{2}\mathrm{d}t\right\vert ^{\frac{1}{2}}+\frac{3}%
{2}\left\vert \int_{1}^{r}\left\vert t^{-3/2}\psi\left(  t\right)  \right\vert
^{2}\mathrm{d}t\right\vert ^{\frac{1}{2}}\right\} \\
&  \leq & \left\vert 1-r^{-1}\right\vert ^{\frac{1}{2}}\left\{  \left\Vert
r^{-1/2}\psi^{\prime}\right\Vert _{L^{2}\left(  0,\infty\right)  }+\left(
3/2\right)  \left\Vert r^{-3/2}\psi\right\Vert _{L^{2}\left(  0,\infty\right)
}\right\}
\end{eqnarray*}
which implies the first of inequalities (\ref{eq:aux2}). For the second
inequality, we similarly start with%
\begin{eqnarray*}
r^{-1/2}\psi^{\prime}\left(  r\right)  -\psi^{\prime}\left(  1\right)   &
= & \int_{1}^{r}\left[  t^{-1/2}\psi^{\prime}\left(  t\right)  \right]  ^{\prime
}\mathrm{d}t\\
&  = & \int_{1}^{r}\left[  t^{-1/2}\psi^{\prime\prime}\left(  t\right)  -\frac
{1}{2}t^{-3/2}\psi^{\prime}\left(  t\right)  \right]  \mathrm{d}t,
\end{eqnarray*}
which holds for each $r>0$, since $\psi^{\prime}\in AC_{\mathrm{loc}}\left(
0,\infty\right)  $. Hence, we obtain the following upper bound on the modulus
of last integral:%
\begin{eqnarray*}
\lefteqn{  \left\vert \int_{1}^{r}t^{-1}\left[  t^{1/2}\psi^{\prime\prime}\left(
t\right)  \right]  \mathrm{d}t\right\vert +\frac{1}{2}\left\vert \int_{1}%
^{r}t^{-1}\left[  t^{-1/2}\psi^{\prime}\left(  t\right)  \right]
\mathrm{d}t\right\vert } \\
&  \leq & \left\vert \int_{1}^{r}t^{-2}\mathrm{d}t\right\vert ^{\frac{1}{2}%
}\left\{  \left\vert \int_{1}^{r}\left\vert t^{1/2}\psi^{\prime\prime}\left(
t\right)  \right\vert ^{2}\mathrm{d}t\right\vert ^{\frac{1}{2}}+\frac{1}%
{2}\left\vert \int_{1}^{r}\left\vert t^{-1/2}\psi^{\prime}\left(  t\right)
\right\vert ^{2}\mathrm{d}t\right\vert ^{\frac{1}{2}}\right\} \\
&  \leq & \left\vert 1-r^{-1}\right\vert ^{\frac{1}{2}}\left\{  \left\Vert
r^{1/2}\psi^{\prime\prime}\right\Vert _{L^{2}\left(  0,\infty\right)
}+\left(  1/2\right)  \left\Vert r^{-1/2}\psi^{\prime}\right\Vert
_{L^{2}\left(  0,\infty\right)  }\right\}  .
\end{eqnarray*}

(ii) For the first inequality, we employ the Leibniz-Newton formula%
\[
r^{-1}\psi\left(  r\right)  -\psi\left(  1\right)  =\int_{1}^{r}\left[
t^{-1}\psi\left(  t\right)  \right]  ^{\prime}\mathrm{d}t,
\]
together with the estimate%
\begin{eqnarray*}
\lefteqn{  \left\vert \int_{1}^{r}t^{-1/2}\left(  t^{1/2}\left[  t^{-1}\psi\left(
t\right)  \right]  ^{\prime}\right)  \mathrm{d}t\right\vert  } \\
&  \leq & \left\vert \int_{1}^{r}t^{-1}\mathrm{d}t\right\vert ^{\frac{1}{2}%
}\left\vert \int_{1}^{r}\left\vert t^{1/2}\left[  t^{-1}\psi\left(  t\right)
\right]  ^{\prime}\right\vert ^{2}\mathrm{d}t\right\vert ^{\frac{1}{2}}\\
&  \leq & \left\vert \ln r\right\vert ^{\frac{1}{2}}\left\Vert r^{1/2}\left(
r^{-1}\psi\right)  ^{\prime}\right\Vert _{L^{2}\left(  0,\infty\right)  },
\end{eqnarray*}
the last norm being finite due to $r^{1/2}\left(  r^{-1}\psi\right)  ^{\prime
}=r^{-1/2}\psi^{\prime}-r^{-3/2}\psi$. Since $\psi^{\prime}\in AC_{\mathrm{loc}%
}\left(  0,\infty\right)  $, the second inequality is obtained via%
\[
\psi^{\prime}\left(  r\right)  -\psi^{\prime}\left(  1\right)  =\int_{1}%
^{r}\psi^{\prime\prime}\left(  t\right)  \mathrm{d}t,
\]
followed by a similar estimate, this time in terms of $\left\Vert r^{1/2}%
\psi^{\prime\prime}\right\Vert _{L^{2}\left(  0,\infty\right)  }$.
\qed
\end{proof}

\subsection{Beppo Levi $L_{k}$-splines}

As observed in \cite{IMA13}, due to the Plancherel-type formula
(\ref{eq:Plancherel}), to obtain the variational characterization of the Beppo
Levi polyspline $s$ as minimizer of the polar thin plate energy
(\ref{eq:Du-polar}), it is sufficient to show that, for each $k\in\mathbb{Z}$,
the amplitude coefficient $\widehat{s}_{k}$ minimizes the corresponding energy
component (\ref{eq:k-norm}). Letting $Q\left(  r,g,g^{\prime},g^{\prime\prime
}\right)  $ denote the integrand of (\ref{eq:k-norm}), classical calculus of
variations considerations imply that, except at the interpolation locations
$r_{1}$,\ldots, $r_{n}$, a minimizer of (\ref{eq:k-norm}) should satisfy the
Euler-Lagrange equation%
\[
\partial_{g}Q-\frac{\mathrm{d}}{\mathrm{d}r}\partial_{g^{\prime}}%
Q+\frac{\mathrm{d}^{2}}{\mathrm{d}r^{2}}\partial_{g^{\prime\prime}}Q=0.
\]
The resulting left-hand side Euler-Lagrange differential operator is given, up
to a constant factor, by%
\begin{eqnarray*}
L_{k}  &  := & r\frac{\mathrm{d}^{4}}{\mathrm{d}r^{4}}+2\frac{\mathrm{d}^{3}%
}{\mathrm{d}r^{3}}-\frac{2k^{2}+1}{r}\frac{\mathrm{d}^{2}}{\mathrm{d}r^{2}%
}+\frac{2k^{2}+1}{r^{2}}\frac{\mathrm{d}}{\mathrm{d}r}+\frac{k^{4}-4k^{2}%
}{r^{3}}\\
&  = & r\left(  \frac{\mathrm{d}^{2}}{\mathrm{d}r^{2}}+\frac{1}{r}\frac
{\mathrm{d}}{\mathrm{d}r}-\frac{k^{2}}{r^{2}}\right)  ^{2}.
\end{eqnarray*}
Therefore, $\widehat{s}_{k}$ should necessarily be annihilated by $L_{k}$ on
each subinterval $\left(  0,r_{1}\right)  $, $\left(  r_{1},r_{2}\right)  $,
\ldots, $\left(  r_{n},\infty\right)  $.

The null-space $\mathrm{Ker}L_{k}$ is computed in \cite{Ku01} via the
substitution $r=e^{t}$, $\frac{\mathrm{d}}{\mathrm{d}t}=r\frac{\mathrm{d}%
}{\mathrm{d}r}$, which transforms $L_{k}$ into a differential operator with
constant coefficients in variable $t$. Standard factorization then implies%
\begin{equation}
L_{k}=\frac{1}{r^{3}}\left(  r\frac{\mathrm{d}}{\mathrm{d}r}-\left\vert
k\right\vert \right)  \left(  r\frac{\mathrm{d}}{\mathrm{d}r}-\left\vert
k\right\vert -2\right)  \left(  r\frac{\mathrm{d}}{\mathrm{d}r}+\left\vert
k\right\vert \right)  \left(  r\frac{\mathrm{d}}{\mathrm{d}r}+\left\vert
k\right\vert -2\right)  , \label{eq:Lk-fact1}%
\end{equation}
hence%
\begin{equation}
\mathrm{Ker}L_{k}=\left\{
\begin{array}
[c]{ll}%
\mathrm{span}\left\{  r^{2},r^{2}\ln r,1,\ln r\right\}  , & \mathrm{if\ 
}k=0\mathrm{,}\\
\mathrm{span}\left\{  r^{3},r,r\ln r,r^{-1}\right\}  , & \mathrm{if\ }\left\vert
k\right\vert =1\mathrm{,}\\
\mathrm{span}\left\{  r^{\left\vert k\right\vert +2},r^{\left\vert
k\right\vert },r^{-\left\vert k\right\vert +2},r^{-\left\vert k\right\vert
}\right\}  , & \mathrm{if\ }\left\vert k\right\vert \geq2\mathrm{.}%
\end{array}
\right.  \label{eq:ker}%
\end{equation}
Moreover, the condition that the polar Beppo Levi energy component
(\ref{eq:k-norm}) is finite further restricts the form of $\widehat{s}_{k}$ on
the extreme intervals $\left(  0,r_{1}\right)  $ and $\left(  r_{n}%
,\infty\right)  $. Specifically, for $k\not =0$, evaluating (\ref{eq:k-norm})
for each of the four generating functions of $\mathrm{Ker}L_{k}$, we obtain
the necessary conditions%
\[
\widehat{s}_{k}\left(  r\right)  \in\left\{
\begin{array}
[c]{ll}%
\mathrm{span}\left\{  r^{\left\vert k\right\vert +2},r^{\left\vert
k\right\vert }\right\}  , & \mathrm{for\ }r\in\left(  0,r_{1}\right)  \mathrm{,}\\
\mathrm{span}\left\{  r^{-\left\vert k\right\vert +2},r^{-\left\vert
k\right\vert }\right\}  , & \mathrm{for\ }r\in\left(  r_{n},\infty\right)
\mathrm{.}%
\end{array}
\right.
\]
Note that $\mathrm{span}\left\{  r^{\left\vert k\right\vert +2},r^{\left\vert
k\right\vert }\right\}  =\mathrm{Ker}G_{k}$ and $\mathrm{span}\left\{
r^{-\left\vert k\right\vert +2},r^{-\left\vert k\right\vert }\right\}
=\mathrm{Ker}R_{k}$, where%
\begin{eqnarray}
G_{k}  &  := & \frac{1}{r}\left[  \frac{\mathrm{d}^{2}}{\mathrm{d}r^{2}}%
-\frac{2\left\vert k\right\vert +1}{r}\frac{\mathrm{d}}{\mathrm{d}r}%
+\frac{\left\vert k\right\vert \left(  \left\vert k\right\vert +2\right)
}{r^{2}}\right] \nonumber\\
&  = & \frac{1}{r^{3}}\left(  r\frac{\mathrm{d}}{\mathrm{d}r}-\left\vert
k\right\vert \right)  \left(  r\frac{\mathrm{d}}{\mathrm{d}r}-\left\vert
k\right\vert -2\right)  ,\nonumber\\
R_{k}  &  := & \frac{1}{r}\left[  \frac{\mathrm{d}^{2}}{\mathrm{d}r^{2}}%
+\frac{2\left\vert k\right\vert -1}{r}\frac{\mathrm{d}}{\mathrm{d}r}%
+\frac{\left\vert k\right\vert \left(  \left\vert k\right\vert -2\right)
}{r^{2}}\right] \nonumber\\
&  = & \frac{1}{r^{3}}\left(  r\frac{\mathrm{d}}{\mathrm{d}r}+\left\vert
k\right\vert \right)  \left(  r\frac{\mathrm{d}}{\mathrm{d}r}+\left\vert
k\right\vert -2\right)  . \label{eq:end-ops}%
\end{eqnarray}

\begin{remark}
It can be verified that $r^{-3}$ is the only factor of the form
$r^{\alpha}$ which, when inserted in front of the last two brackets in
the right-hand side of the above formulae, turns $G_{k}$ and $R_{k}%
$ into mutually adjoint operators. Indeed, the formal adjoint of %
$G_{k}$ is%
\[
G_{k}^{\ast}=\frac{\mathrm{d}^{2}}{\mathrm{d}r^{2}}\left(  \frac{1}{r}%
\cdot\right)  +\left(  2\left\vert k\right\vert +1\right)  \frac{\mathrm{d}%
}{\mathrm{d}r}\left(  \frac{1}{r^{2}}\cdot\right)  +\frac{\left\vert
k\right\vert \left(  \left\vert k\right\vert +2\right)  }{r^{3}}=R_{k}%
\]
and a similar computation shows $R_{k}^{\ast}=G_{k}$.
\end{remark}

Recall the notation $\rho:=\left\{  r_{1},\ldots,r_{n}\right\}  $ used in the Introduction.

\begin{definition}
\label{def:natLspline}Let $k\not =0$. A function $\eta:\left[  0,\infty
\right)  \rightarrow\mathbb{C}$ is called a \emph{Beppo Levi }$L_{k}%
$\emph{-spline on }$\rho$ if the following conditions hold:

\emph{(i)} $L_{k}\eta\left(  r\right)  =0,\ \forall r\in\left(
r_{j},r_{j+1}\right),\ \forall j\in\left\{  1,\ldots,n-1\right\}  $;

\emph{(ii)} $G_{k}\eta\left(  r\right)  =0$, $\forall r\in\left(
0,r_{1}\right)  $, and $R_{k}\eta\left(  r\right)  =0$, $\forall r>r_{n}$.

\emph{(iii)} $\eta$ is $C^{2}$-continuous at each knot $r_{1}$,\ldots, $r_{n}$.

\noindent The space of all Beppo Levi $L_{k}$-splines on $\rho$ will be
labelled $\mathcal{S}_{k}\left(  \rho\right)  $.
\end{definition}

\noindent Due to conditions (ii), $\mathcal{S}_{k}\left(  \rho\right)  $ is a
subspace of $\Lambda_{1}$ if $\left\vert k\right\vert =1$, and of $\Lambda
_{2}$ if $\left\vert k\right\vert \geq2$.

For $k=0$, the related notion of a Beppo Levi $L_{0}$-spline is treated in
\cite{RTPS}. In this case, the correct left/right operators $G_{0}$ and
$R_{0}$ on the extreme intervals are not obtained by just letting $k=0$ in
(\ref{eq:end-ops}). Also, $G_{0}$ and $R_{0}$ are not anymore mutually adjoint.

The proof of the next result follows from the definition of biharmonic Beppo
Levi polysplines on annuli \cite{MMA12}.

\begin{proposition}
A univariate function $\eta:\left[  0,\infty\right)  \rightarrow\mathbb{C}$ is
a Beppo Levi $L_{k}$-spline on $\rho$, i.e. $\eta\in\mathcal{S}_{k}\left(
\rho\right)  $, if and only if the polar surface $s\left(  r,\theta\right)
:=\eta\left(  r\right)  e^{-ik\theta}$ is a biharmonic Beppo Levi polyspline
on annuli determined by $\rho$.
\end{proposition}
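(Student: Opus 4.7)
My plan is to substitute $s(r,\theta) = \eta(r)\,e^{-ik\theta}$ directly into each component of the definition of a biharmonic Beppo Levi polyspline and translate it termwise into an equivalent condition on $\eta$. The central algebraic observation is that, since $\partial_{\theta\theta}s = -k^{2}s$, applying the polar Laplacian twice yields $\Delta^{2}s = (H_{k}^{2}\eta)(r)\,e^{-ik\theta}$, where $H_{k} := \partial_{rr} + r^{-1}\partial_{r} - k^{2}r^{-2}$. Because the excerpt has already established $L_{k} = rH_{k}^{2}$, the piecewise biharmonic property of $s$ on each annulus determined by $\rho$ (middle or extreme) is equivalent to $L_{k}\eta \equiv 0$ on each corresponding open subinterval of $(0,\infty)\setminus\rho$. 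Restricted to the middle intervals $(r_{j},r_{j+1})$, this is exactly Definition~\ref{def:natLspline}(i); on the extreme intervals it gives only $L_{k}\eta = 0$, which is strictly weaker than (ii) and will be sharpened by the energy consideration below.

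For the interface regularity, I would observe that $s$ is the product of $\eta(r)$ with a function smooth in $\theta$, so the polar chain rule reduces $S \in C^{2}(\mathbb{R}^{2}\setminus\{0\})$ across every interface circle $r = r_{j} > 0$ to the condition $\eta \in C^{2}$ at $r_{j}$, which is (iii). To handle the energy, I would exploit that $s$ has only one nonzero Fourier coefficient, namely $\widehat{s}_{-k} = \eta$, so the Plancherel identity (\ref{eq:Plancherel}), combined with the evenness of (\ref{eq:k-norm}) in $k$, collapses to $\|s\|_{BL}^{2} = 2\pi\|\eta\|_{k}^{2}$. Hence finiteness of the Beppo Levi energy (\ref{eq:Du-polar}) of $s$ is equivalent to $\|\eta\|_{k}^{2} < \infty$; combined with $L_{k}\eta = 0$ on each extreme interval, this forces $\eta \in \mathrm{Ker}\,G_{k}$ on $(0,r_{1})$ and $\eta \in \mathrm{Ker}\,R_{k}$ on $(r_{n},\infty)$, via the basis (\ref{eq:ker}) and the case-by-case evaluation of (\ref{eq:k-norm}) already noted in Section~2.2, recovering (ii).

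Each of these translations is reversible, so assembling them gives the `if and only if' equivalence. The step I would check most carefully is the sharpening from $\mathrm{Ker}\,L_{k}$ to $\mathrm{Ker}\,G_{k}$ and $\mathrm{Ker}\,R_{k}$ on the extreme intervals: one must verify that every basis function in $\mathrm{Ker}\,L_{k}\setminus\mathrm{Ker}\,G_{k}$ produces a divergent contribution to (\ref{eq:k-norm}) on $(0,r_{1})$, and similarly for $R_{k}$ on $(r_{n},\infty)$. This is the direct computation referenced in the excerpt for the four generators in each of the cases $|k|=1$ and $|k|\geq 2$, so no genuine obstacle is expected beyond careful bookkeeping.
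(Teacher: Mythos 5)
Your proposal is correct and is precisely the argument the paper has in mind: the paper offers no written proof, stating only that the result ``follows from the definition of biharmonic Beppo Levi polysplines on annuli,'' and your termwise translation (piecewise biharmonicity $\Leftrightarrow$ $L_k\eta=0$ via $L_k=rH_k^2$, interface $C^2$-regularity $\Leftrightarrow$ condition (iii), and finiteness of the energy via the one-term Plancherel identity $\Vert s\Vert_{BL}^2=2\pi\Vert\eta\Vert_k^2$ forcing the $G_k$/$R_k$ conditions on the extreme intervals) is exactly that unpacking. The one step you rightly flag---that a nontrivial combination involving the excluded basis functions of $\mathrm{Ker}\,L_k$ still has divergent energy on the relevant extreme interval---is the same computation the paper already invokes in Section~2.2 to derive the necessary conditions, so no gap remains.
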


We now review some relevant literature. It was pointed out by Kounchev
\cite[p.\ 91]{Ku01} that, on any interval of positive real numbers, the null
space of $L_{k}$ can be described as an extended complete Chebyshev (ECT)
system in the sense of Karlin and Ziegler \cite{KZ66}, via the representation%
\[
r^{\left\vert k\right\vert }L_{k}=D_{4}D_{3}D_{2}D_{1}=\frac{\mathrm{d}%
}{\mathrm{d}r}r^{2\left\vert k\right\vert +1}\frac{\mathrm{d}}{\mathrm{d}%
r}\frac{1}{r^{2\left\vert k\right\vert +1}}\frac{\mathrm{d}}{\mathrm{d}%
r}r^{2\left\vert k\right\vert +1}\frac{\mathrm{d}}{\mathrm{d}r}\frac
{1}{r^{\left\vert k\right\vert }},
\]
where $D_{1}=\frac{\mathrm{d}}{\mathrm{d}r}\left(  \frac{1}{r^{\left\vert
k\right\vert }}\cdot\right)  $, $D_{2}=D_{4}=\frac{\mathrm{d}}{\mathrm{d}%
r}\left(  r^{2\left\vert k\right\vert +1}\cdot\right)  $, $D_{3}%
=\frac{\mathrm{d}}{\mathrm{d}r}\left(  \frac{1}{r^{2\left\vert k\right\vert
+1}}\cdot\right)  $. We also observe, following Schumaker \cite[p.\ 398]%
{LLS07}, that $L_{k}$ possesses the factorization%
\begin{equation}
L_{k}=M_{k}^{\ast}M_{k}, \label{eq:Lk-fact2}%
\end{equation}
where $M_{k}^{\ast}$ denotes the formal adjoint of
\begin{eqnarray*}
M_{k}  &  := & \frac{1}{\sqrt{r^{2\left\vert k\right\vert +1}}}\frac{\mathrm{d}%
}{\mathrm{d}r}r^{2\left\vert k\right\vert +1}\frac{\mathrm{d}}{\mathrm{d}%
r}\frac{1}{r^{\left\vert k\right\vert }}=\sqrt{r}\left(  \frac{\mathrm{d}^{2}%
}{\mathrm{d}r^{2}}+\frac{1}{r}\frac{\mathrm{d}}{\mathrm{d}r}-\frac{k^{2}%
}{r^{2}}\right) \\
&  = & r^{-3/2}\left(  r\frac{\mathrm{d}}{\mathrm{d}r}-\left\vert k\right\vert
\right)  \left(  r\frac{\mathrm{d}}{\mathrm{d}r}+\left\vert k\right\vert
\right)  .
\end{eqnarray*}
Due to (\ref{eq:Lk-fact2}), a function that satisfies conditions (i) and (iii)
of Definition~\ref{def:natLspline} can be characterized as a `generalized spline'
or `$M_{k}$-spline' on $\left[  r_{1},r_{n}\right]  $ in the sense of Ahlberg,
Nilson, and Walsh \cite{ANW67}, Schultz and Varga \cite{SV67}. However, our
labeling such a function as a `$L_{k}$-spline' agrees with the terminology of
Lucas \cite{L70} and Jerome and Pierce \cite{JP72}, which is more adequate, in
view of the fact that $L_{k}$ may possess other factorizations of the type
(\ref{eq:Lk-fact2}). Indeed, for $k\not =0$, our adjoint boundary operators
$G_{k}$ and $R_{k}$ actually generate, via (\ref{eq:Lk-fact1}), the
factorization%
\begin{equation}
L_{k}=G_{k}r^{3}R_{k}=\widetilde{L}_{k}^{\ast}\widetilde{L}_{k},\quad
\mathrm{where\ }\widetilde{L}_{k}:=r^{3/2}R_{k}. \label{eq:Lk-fact3}%
\end{equation}
This differs from (\ref{eq:Lk-fact2}) for $\left\vert k\right\vert \geq2$,
while it coincides with (\ref{eq:Lk-fact2}) for $\left\vert k\right\vert =1$.

On the other hand, the `natural' end conditions of $L$-spline literature (see
\cite{LLS07}) are always formulated in terms of a single differential operator
at both ends of the interpolation domain. It is thus remarkable that adjoint
boundary operators as in condition (ii) of our definition have also occured in
\cite{CA11}, in the context of exponential $L$-splines generated as Fourier
coefficients of Beppo Levi polyspline surfaces on parallel strips. Such
exponential $L$-splines coincide in fact with Mat\'{e}rn kernels on the full
real line (for Mat\'{e}rn kernels on a compact interval, see \cite{CFM14}). As
shown in \cite{JAT08}, adjoint $L$-spline end conditions are intimately
connected to Wiener-Hopf factorizations for semi-cardinal interpolation.

\section{Interpolation with Beppo Levi $L_{k}$-splines}

\subsection{A fundamental identity}

We employ the notations introduced in the previous section.

\begin{theorem}
\label{thm:FI}\emph{(i)} Let $k\in\mathbb{Z}$, $\left\vert k\right\vert \geq
2$, and an arbitrary Beppo Levi $L_{k}$-spline $\eta\in\mathcal{S}_{k}\left(
\rho\right)  $. Also, assume that $\psi\in\Lambda_{2}$ vanishes on the
knot-set $\rho$:%
\begin{equation}
\psi\left(  r_{j}\right)  =0,\quad\forall j\in\left\{  1,\ldots,n\right\}  .
\label{eq:zero-data}%
\end{equation}
Then the following orthogonality relation holds:%
\begin{equation}
\int_{0}^{\infty}r^{3}\left[  R_{k}\eta\left(  r\right)  \right]  \left[
R_{k}\overline{\psi}\left(  r\right)  \right]  \mathrm{d}r=0. \label{eq:ortho}%
\end{equation}

\emph{(ii)} The same conclusion holds if $k=\pm1$ and $\psi\in\Lambda_{1}$
satisfies \emph{(\ref{eq:zero-data})}.
\end{theorem}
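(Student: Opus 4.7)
The plan is to exploit the factorization $L_{k}=G_{k}r^{3}R_{k}$ from (\ref{eq:Lk-fact3}) together with the adjoint identity $R_{k}^{\ast}=G_{k}$ recorded in the Remark. Setting $\phi:=r^{3}R_{k}\eta$, so that $L_{k}\eta=G_{k}\phi$, Lagrange's identity for the second-order operator $R_{k}$ on any subinterval $(a,b)\subset(0,\infty)$ yields
\[
\int_{a}^{b}\phi\cdot R_{k}\overline{\psi}\,\mathrm{d}r-\int_{a}^{b}(G_{k}\phi)\overline{\psi}\,\mathrm{d}r=\bigl[\mathcal{B}(\overline{\psi},\phi)\bigr]_{a}^{b},
\]
with conjunct
\[
\mathcal{B}(\overline{\psi},\phi)(r):=\frac{1}{r}\bigl(\phi(r)\overline{\psi}'(r)-\overline{\psi}(r)\phi'(r)\bigr)+\frac{2|k|}{r^{2}}\overline{\psi}(r)\phi(r).
\]

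I would then split the integral in (\ref{eq:ortho}) over $(0,r_{1})$, $(r_{1},r_{2})$, \ldots, $(r_{n-1},r_{n})$, $(r_{n},\infty)$. The last piece contributes nothing since Definition~\ref{def:natLspline}(ii) forces $R_{k}\eta\equiv 0$ on $(r_{n},\infty)$. On every other piece, the bulk term $\int(G_{k}\phi)\overline{\psi}\,\mathrm{d}r=\int L_{k}\eta\cdot\overline{\psi}\,\mathrm{d}r$ vanishes as well: for an inner interval this is Definition~\ref{def:natLspline}(i), while on $(0,r_{1})$ the condition $G_{k}\eta=0$ places $\eta$ in $\mathrm{span}\{r^{|k|+2},r^{|k|}\}$, which is a subspace of $\mathrm{Ker}L_{k}$ by (\ref{eq:ker}). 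The full integral thus reduces to the signed sum of $\mathcal{B}(\overline{\psi},\phi)$-values at $0^{+}$ and at the knots $r_{1},\ldots,r_{n}$.

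At each interior knot $r_{j}$, $1\leq j\leq n-1$, hypothesis (\ref{eq:zero-data}) kills the terms containing $\overline{\psi}(r_{j})$, leaving only $r_{j}^{-1}\phi(r_{j})\overline{\psi}'(r_{j})$; since $\eta\in C^{2}$ at each knot, $\phi=r^{3}R_{k}\eta$ is continuous across $r_{j}$, and $\overline{\psi}'$ is continuous too, so the contributions from the left and right sides cancel in the telescoping sum. At $r=r_{n}$, the same $C^{2}$-continuity combined with $R_{k}\eta\equiv 0$ on $(r_{n},\infty)$ forces $\phi(r_{n})=0$, and together with $\overline{\psi}(r_{n})=0$ this gives $\mathcal{B}(\overline{\psi},\phi)(r_{n})=0$.

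The main obstacle is the limit $r\to 0^{+}$, and this is precisely where Lemma~\ref{le:aux} plays its role. A direct computation from $R_{k}(r^{|k|+2})=4|k|(|k|+1)r^{|k|-1}$ and $R_{k}(r^{|k|})=4|k|(|k|-1)r^{|k|-3}$ shows that on $(0,r_{1})$ one has $\phi(r)=O(r^{|k|})$ and $\phi'(r)=O(r^{|k|-1})$ as $r\to 0^{+}$. For part (i), with $|k|\geq 2$ and $\psi\in\Lambda_{2}$, Lemma~\ref{le:aux}(i) gives $|\psi(r)|=O(r)$ and $|\psi'(r)|=O(1)$ near the origin, making each of the three terms in $\mathcal{B}(\overline{\psi},\phi)(r)$ of order $O(r^{|k|-1})$, which tends to zero. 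For part (ii), with $|k|=1$ and $\psi\in\Lambda_{1}$, the summand $r\in\mathrm{Ker}R_{k}$ drops out so that only $\phi(r)=O(r^{3})$ and $\phi'(r)=O(r^{2})$ contribute; the logarithmic bounds of Lemma~\ref{le:aux}(ii) then make every term in the conjunct of order $O(r^{2}(1+|\ln r|^{1/2}))$, which also vanishes. Summing all the now-zero boundary contributions establishes (\ref{eq:ortho}).
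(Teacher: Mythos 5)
Your argument is correct, and it reaches (\ref{eq:ortho}) by a genuinely different organization of the integration by parts than the paper's. The paper factors $R_{k}=\frac{1}{r^{3}}(r\frac{\mathrm{d}}{\mathrm{d}r}+|k|)(r\frac{\mathrm{d}}{\mathrm{d}r}+|k|-2)$, absorbs one first-order factor into auxiliary functions $\eta_{1},\psi_{1}$, integrates by parts only \emph{once}, and then disposes of the residual bulk integral by noting that, on each subinterval, three of the four first-order factors of $L_{k}$ applied to $\eta$ leave a multiple of $r^{|k|}$, so the remaining integrand is the exact derivative $\frac{\mathrm{d}}{\mathrm{d}r}[r^{|k|-2}\overline{\psi}]$, whose telescoping sum dies by (\ref{eq:zero-data}). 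You instead apply the full second-order Lagrange identity for the mutually adjoint pair $(R_{k},G_{k})$, so the bulk term becomes $\int L_{k}\eta\cdot\overline{\psi}$, which vanishes identically piecewise, and all the information is pushed into the conjunct $\mathcal{B}$; there (\ref{eq:zero-data}) kills the terms carrying $\overline{\psi}(r_{j})$ (conveniently, exactly the ones involving the possibly discontinuous $\phi'$), while continuity of $\phi=r^{3}R_{k}\eta$ (from $C^{2}$-continuity of $\eta$) and of $\overline{\psi}'$ makes the $\frac{1}{r}\phi\overline{\psi}'$ terms telescope. The two proofs use the same three ingredients (the partition, Lemma~\ref{le:aux} at the origin, the knot conditions), but yours makes the adjointness $R_{k}^{\ast}=G_{k}$ and the factorization (\ref{eq:Lk-fact3}) do the structural work, which is arguably the more transparent and more generalizable route, whereas the paper's exact-derivative trick is specific to the first-order factor structure of $L_{k}$. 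Your endpoint asymptotics at $0^{+}$ and the treatment of $r_{n}$ check out in both cases $|k|\geq2$ and $|k|=1$. The only point you should state explicitly (the paper does, via Cauchy--Schwarz) is that the integral $I_{k}$ converges absolutely near $r=0$, so that splitting it over the partition and letting the lower limit tend to $0^{+}$ in the Lagrange identity on $(0,r_{1})$ is legitimate; this is immediate in your setup since $r^{3/2}R_{k}\eta=O(r^{|k|-3/2})$ is bounded near $0$ while $r^{3/2}R_{k}\overline{\psi}\in L^{2}(0,r_{1})\subset L^{1}(0,r_{1})$ by the defining conditions of $\Lambda_{1}$, $\Lambda_{2}$.
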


\begin{proof}
For convenience, let us denote the left-hand side of (\ref{eq:ortho}) by
$I_{k}:=I_{k}\left(  \eta,\psi\right)  $. Note that, for any $k\not =0$,
$\eta\in\mathcal{S}_{k}\left(  \rho\right)  $ implies $R_{k}\eta\left(
r\right)  =0$, $\forall r>r_{n}$, hence we can work with integral $I_{k}$ on
the integration domain $(0,r_{n}]$. Since%
\[
r^{3/2}R_{k}\psi=r^{1/2}\psi^{\prime\prime}+\left(  2\left\vert k\right\vert
-1\right)  r^{-1/2}\psi^{\prime}+\left\vert k\right\vert \left(  \left\vert
k\right\vert -2\right)  r^{-3/2}\psi,
\]
the hypotheses imply, via Cauchy-Schwarz inequality, that $I_{k}$ is an
absolutely convergent integral. Using the factorization of the operator
$R_{k}$ and making the notation%
\[%
\begin{array}
[c]{l}%
\eta_{1}\left(  r\right)  :=\left(  r\frac{\mathrm{d}}{\mathrm{d}%
r}+\left\vert k\right\vert -2\right)  \eta\left(  r\right)  ,\\
\psi_{1}\left(  r\right)  :=\left(  r\frac{\mathrm{d}}{\mathrm{d}%
r}+\left\vert k\right\vert -2\right)  \psi\left(  r\right)  ,
\end{array}
\]
we have%
\begin{eqnarray*}
I_{k}  &  = & \int_{0}^{r_{n}}r^{-3}\left[  \left(  r\frac{\mathrm{d}}%
{\mathrm{d}r}+\left\vert k\right\vert \right)  \eta_{1}\left(  r\right)
\right]  \left[  \left(  r\frac{\mathrm{d}}{\mathrm{d}r}+\left\vert
k\right\vert \right)  \overline{\psi_{1}}\left(  r\right)  \right]
\mathrm{d}r\\
&  = & \sum_{j=1}^{n}\int_{r_{j-1}}^{r_{j}}r^{-2}\left[  \left(  r\frac
{\mathrm{d}}{\mathrm{d}r}+\left\vert k\right\vert \right)  \eta_{1}\left(
r\right)  \right]  \frac{\mathrm{d}}{\mathrm{d}r}\overline{\psi_{1}}\left(
r\right)  \mathrm{d}r\\
&  & \mbox{}+\sum_{j=1}^{n}\int_{r_{j-1}}^{r_{j}}r^{-3}\left[  \left(  r\frac
{\mathrm{d}}{\mathrm{d}r}+\left\vert k\right\vert \right)  \eta_{1}\left(
r\right)  \right]  \left\vert k\right\vert \overline{\psi_{1}}\left(
r\right)  \mathrm{d}r,
\end{eqnarray*}
where all integrals remain absolutely convergent and $r_{0}:=0$. Next, we
apply integration by parts in each term of the first sum, which is permitted
due to the fact that $\psi_{1}\in AC_{\mathrm{loc}}\left(  0,\infty\right)  $.
Since
\[
\frac{\mathrm{d}}{\mathrm{d}r}\left\{  r^{-2}\left[  \left(  r\frac
{\mathrm{d}}{\mathrm{d}r}+\left\vert k\right\vert \right)  \eta_{1}\left(
r\right)  \right]  \right\}  =r^{-3}\left[  \left(  r\frac{\mathrm{d}%
}{\mathrm{d}r}-2\right)  \left(  r\frac{\mathrm{d}}{\mathrm{d}r}+\left\vert
k\right\vert \right)  \eta_{1}\left(  r\right)  \right]  ,
\]
we obtain%
\begin{eqnarray*}
I_{k}  &  = & \sum_{j=1}^{n}\left[  \overline{\psi_{1}}\left(  r\right)
r^{-2}\left(  r\frac{\mathrm{d}}{\mathrm{d}r}+\left\vert k\right\vert \right)
\eta_{1}\left(  r\right)  \right]  _{r_{j-1}}^{r_{j}}\\
&  & \mbox{}-\sum_{j=1}^{n}\int_{r_{j-1}}^{r_{j}}r^{-3}\overline{\psi_{1}}\left(
r\right)  \left(  r\frac{\mathrm{d}}{\mathrm{d}r}-\left\vert k\right\vert
-2\right)  \left(  r\frac{\mathrm{d}}{\mathrm{d}r}+\left\vert k\right\vert
\right)  \eta_{1}\left(  r\right)  \mathrm{d}r.
\end{eqnarray*}
Since $\psi$ has continuity $C^{1}$ and $\eta$ has continuity $C^{2}$, the
first sum of the last display is telescopic, hence we only have to evaluate
the boundary terms corresponding to $r:=r_{n}$ and $r:=r_{0}=0$. Note that the
boundary term at $r_{n}$ is zero, since the condition $R_{k}\eta\left(
r\right)  =0$, $\forall r>r_{n}$, of a Beppo Levi $L_{k}$-spline implies, by
continuity, the relation $\left[  \left(  r\frac{\mathrm{d}}{\mathrm{d}%
r}+\left\vert k\right\vert \right)  \eta_{1}\left(  r\right)  \right]
_{r=r_{n}}=0$.

For the boundary term at $0$, consider first the case $\left\vert k\right\vert
\geq2$. Then the left end condition $G_{k}\eta\left(  r\right)  =0$,
\emph{i.e.} $\eta\in\mathrm{span}\left\{  r^{\left\vert k\right\vert
+2},r^{\left\vert k\right\vert }\right\}  $, for $r\in\left(  0,r_{1}\right)
$, implies%
\[
r^{-2}\left[  \left(  r\frac{\mathrm{d}}{\mathrm{d}r}+\left\vert k\right\vert
\right)  \eta_{1}\left(  r\right)  \right]  =O\left(  r^{\left\vert
k\right\vert -2}\right)  ,\quad\mathrm{as\ }r\rightarrow0.
\]
Since, by Lemma~\ref{le:aux}, $\psi_{1}\left(  r\right)  =O\left(  r\right)
$, as $r\rightarrow0$, we deduce that the boundary term at $0$ vanishes if
$\left\vert k\right\vert \geq2$. If $\left\vert k\right\vert =1$, the left end
condition implies $\eta\in\mathrm{span}\left\{  r^{3},r^{1}\right\}  $, for
$r\in\left(  0,r_{1}\right)  $, hence%
\[
r^{-2}\left[  \left(  r\frac{\mathrm{d}}{\mathrm{d}r}+1\right)  \eta
_{1}\left(  r\right)  \right]  =cr,\quad\forall r\in\left(  0,r_{1}\right)  ,
\]
for some constant $c$. Since, by Lemma~\ref{le:aux}, in this case $\psi
_{1}\left(  r\right)  =O\left(  r\left\vert \ln r\right\vert ^{1/2}\right)  $,
as $r\rightarrow0$, it follows that the boundary term at $0$ also vanishes if
$\left\vert k\right\vert =1$.

On the other hand, for each $j\in\left\{  1,\ldots,n\right\}  $, since
$\eta\in\mathrm{Ker}L_{k}$ on the interval $\left(  r_{j-1},r_{j}\right)  $,
there exists a constant $c_{j}$ such that%
\[
\left(  r\frac{\mathrm{d}}{\mathrm{d}r}-\left\vert k\right\vert -2\right)
\left(  r\frac{\mathrm{d}}{\mathrm{d}r}+\left\vert k\right\vert \right)
\eta_{1}\left(  r\right)  =c_{j}r^{\left\vert k\right\vert },\quad\forall
r\in\left(  r_{j-1},r_{j}\right)  .
\]
Hence%
\begin{eqnarray*}
I_{k}  &  = & \sum_{j=1}^{n}c_{j}\int_{r_{j-1}}^{r_{j}}r^{\left\vert k\right\vert
-3}\left(  r\frac{\mathrm{d}}{\mathrm{d}r}+\left\vert k\right\vert -2\right)
\overline{\psi}\left(  r\right)  \mathrm{d}r\\
&  = & \sum_{j=1}^{n}c_{j}\int_{r_{j-1}}^{r_{j}}\frac{\mathrm{d}}{\mathrm{d}%
r}\left[  r^{\left\vert k\right\vert -2}\overline{\psi}\left(  r\right)
\right]  \mathrm{d}r=\sum_{j=1}^{n}c_{j}\left[  r^{\left\vert k\right\vert
-2}\overline{\psi}\left(  r\right)  \right]  _{r_{j-1}}^{r_{j}}.
\end{eqnarray*}
For $\left\vert k\right\vert \geq2$, since Lemma~\ref{le:aux} implies
$r^{\left\vert k\right\vert -2}\overline{\psi}\left(  r\right)  =O\left(
r\right)  $, as $r\rightarrow0$, and, by hypothesis, $\psi\left(
r_{j}\right)  =0$, $\forall j\in\left\{  1,\ldots,n\right\}  $, we deduce
$I_{k}=0$, as stated. For $\left\vert k\right\vert =1$, we reach the same
conclusion without the need to investigate $r^{-1}\overline{\psi}\left(
r\right)  $ as $r\rightarrow0$, since in this case $c_{1}=0$.
\qed
\end{proof}

\subsection{Existence, uniqueness, and optimality}

\begin{theorem}
\label{thm:EU}Let $\nu_{1},\ldots,\nu_{n}$ be arbitrary real values, where
$n\geq2$. For each $k\not =0$, there exists a unique Beppo Levi $L_{k}$-spline
$\sigma\in\mathcal{S}_{k}\left(  \rho\right)  $, such that%
\begin{equation}
\sigma\left(  r_{j}\right)  =\nu_{j},\quad j\in\left\{  1,\ldots,n\right\}  .
\label{eq:int-cond}%
\end{equation}

\end{theorem}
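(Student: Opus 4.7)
The plan is to treat the interpolation problem as a linear map
\[
E:\mathcal{S}_{k}(\rho)\to\mathbb{C}^{n},\quad\sigma\mapsto\bigl(\sigma(r_{1}),\ldots,\sigma(r_{n})\bigr),
\]
and to prove that $E$ is a bijection by establishing separately its injectivity (uniqueness of the interpolant) and the dimension bound $\dim\mathcal{S}_{k}(\rho)\geq n$ (enough range for surjectivity). The uniqueness half will lean on the orthogonality relation supplied by Theorem~\ref{thm:FI}, while existence follows from a straightforward parameter count.

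For uniqueness, suppose $\sigma\in\mathcal{S}_{k}(\rho)$ satisfies (\ref{eq:int-cond}) with all $\nu_{j}=0$. The remark following Definition~\ref{def:natLspline} places $\sigma$ in $\Lambda_{2}$ when $|k|\geq 2$ and in $\Lambda_{1}$ when $|k|=1$, so $\sigma$ itself is eligible to play the role of the auxiliary function $\psi$ in Theorem~\ref{thm:FI}. Applying that theorem with $\eta=\psi=\sigma$ yields
\[
\int_{0}^{\infty}r^{3}\,|R_{k}\sigma(r)|^{2}\,\mathrm{d}r=0.
\]
Since $\sigma\in C^{2}$ across the knots, $R_{k}\sigma$ is continuous on $(0,\infty)$, so the integrand vanishes pointwise and $\sigma$ lies globally in $\ker R_{k}=\mathrm{span}\{r^{-|k|+2},r^{-|k|}\}$. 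On the initial interval $(0,r_{1})$, the left end condition of Definition~\ref{def:natLspline}(ii) also places $\sigma$ in $\ker G_{k}=\mathrm{span}\{r^{|k|+2},r^{|k|}\}$. For $|k|\geq 2$, the four exponents $|k|+2,\,|k|,\,-|k|+2,\,-|k|$ are pairwise distinct, so $\ker G_{k}\cap\ker R_{k}=\{0\}$, which forces $\sigma\equiv 0$ on $(0,r_{1})$ and then on all of $(0,\infty)$ by uniqueness of continuation of the global $\ker R_{k}$-solution. The borderline case $|k|=1$ needs one additional step: here $\ker G_{1}\cap\ker R_{1}=\mathrm{span}\{r\}$ is one-dimensional, so the matching of the local and global representations gives $\sigma(r)=\alpha r$ throughout $(0,\infty)$, and the interpolation condition $\sigma(r_{1})=0$ then delivers $\alpha=0$.

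For existence, count parameters: a function in $\mathcal{S}_{k}(\rho)$ is assembled from $2$ free coefficients on $(0,r_{1})$, $4$ coefficients on each of the $n-1$ bounded subintervals, and $2$ coefficients on $(r_{n},\infty)$, for a total of $4n$; the $C^{2}$-joining constraints at the $n$ knots impose at most $3n$ linear conditions, so $\dim\mathcal{S}_{k}(\rho)\geq n$. Combined with the injectivity of $E$, which forces $\dim\mathcal{S}_{k}(\rho)\leq n$, this gives $\dim\mathcal{S}_{k}(\rho)=n$ and the bijectivity of $E$, completing the proof. The main obstacle is the uniqueness argument in the case $|k|=1$: the global consequence $R_{k}\sigma=0$ is not by itself enough to conclude, because the left- and right-end characteristic spaces share the monomial $r$, so one must reintroduce the interpolation data at the single remaining degree of freedom.
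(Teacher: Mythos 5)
Your proof is correct and follows essentially the same route as the paper: uniqueness comes from applying Theorem~\ref{thm:FI} with $\eta=\psi=\sigma$ to force $R_{k}\sigma\equiv0$, and existence from a linear-algebra parameter count showing that injectivity of the (square) homogeneous problem yields surjectivity. The only cosmetic differences are that the paper finishes the homogeneous case by noting that a nonzero element of $\mathrm{span}\{r^{-\left\vert k\right\vert +2},r^{-\left\vert k\right\vert}\}$ cannot vanish at the $n\geq2$ knots (avoiding your separate treatment of $\left\vert k\right\vert =1$ via $\mathrm{Ker}\,G_{k}\cap\mathrm{Ker}\,R_{k}$), and that it organizes the count as a $4(n-1)\times4(n-1)$ system on $[r_{1},r_{n}]$ with explicit endpoint conditions (\ref{eq:endpt}) rather than your $4n$-parameter count on $[0,\infty)$.
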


\begin{proof}
It is sufficient to prove the existence of a unique function $\widetilde
{\sigma}\in C^{2}\left[  r_{1},r_{n}\right]  $ such that $\widetilde{\sigma
}\in\mathrm{Ker}L_{k}$ on each subinterval $\left(  r_{j-1},r_{j}\right)  $
with $j\in\left\{  2,\ldots,n\right\}  $, $\widetilde{\sigma}$ satisfies the
interpolation conditions (\ref{eq:int-cond}) in place of $\sigma$, and the
following endpoint conditions hold:%
\begin{equation}%
\begin{array}
[c]{l}%
\left[  \left(  r\frac{\mathrm{d}}{\mathrm{d}r}-\left\vert k\right\vert
\right)  \left(  r\frac{\mathrm{d}}{\mathrm{d}r}-\left\vert k\right\vert
-2\right)  \widetilde{\sigma}\left(  r\right)  \right]  _{r\rightarrow
r_{1}^{+}}=0,\\
\left[  \left(  r\frac{\mathrm{d}}{\mathrm{d}r}+\left\vert k\right\vert
\right)  \left(  r\frac{\mathrm{d}}{\mathrm{d}r}+\left\vert k\right\vert
-2\right)  \widetilde{\sigma}\left(  r\right)  \right]  _{r\rightarrow
r_{n}^{-}}=0.
\end{array}
\label{eq:endpt}%
\end{equation}
Indeed, such a function $\widetilde{\sigma}$ can be uniquely extended to the
required Beppo Levi $L_{k}$-spline $\sigma\in\mathcal{S}_{k}\left(
\rho\right)  $ by defining%
\[
\sigma\left(  r\right)  :=\left\{
\begin{array}
[c]{ll}%
c_{1}r^{\left\vert k\right\vert +2}+c_{2}r^{\left\vert k\right\vert }, &
\mathrm{if\ }0<r<r_{1},\\
& \\
\widetilde{\sigma}\left(  r\right)  , & \mathrm{if\ }r_{1}\leq r\leq r_{n},\\
& \\
c_{3}r^{-\left\vert k\right\vert +2}+c_{4}r^{-\left\vert k\right\vert }, &
\mathrm{if\ }r_{n}<r.
\end{array}
\right.
\]
To verify this, note that $c_{1}$ and $c_{2}$ (respectively, $c_{3}$ and
$c_{4}$) are uniquely determined by the conditions that $\sigma$ and
$\sigma^{\prime}$ are continuous at $r_{1}$ (respectively, at $r_{n}$). The
continuity of $\sigma^{\prime\prime}$ at $r_{1}$ and $r_{n}$ then follows
automatically from (\ref{eq:endpt}) and from the properties $G_{k}%
\sigma\left(  r\right)  =0$, $\forall r\in\left(  0,r_{1}\right)  $, and
$R_{k}\sigma\left(  r\right)  =0$, $\forall r>r_{n}$.

Now, a function $\widetilde{\sigma}$ with the properties stated in the
previous paragraph is determined by four coefficients on each of the $n-1$
subintervals $\left(  r_{j-1},r_{j}\right)  $, $j\in\left\{  2,\ldots
,n\right\}  $. These coefficients are coupled by three $C^{2}$-continuity
conditions at each interior knot $r_{2},\ldots,r_{n-1}$, the endpoint
conditions (\ref{eq:endpt}), and the $n$ interpolation conditions
(\ref{eq:int-cond}), which amount to a $4\left(  n-1\right)  \times4\left(
n-1\right)  $ system of linear equations.

To show that this system has a unique solution, we assume zero interpolation
data: $\nu_{j}=0$, $j\in\left\{  1,\ldots,n\right\}  $. Then the system
becomes homogeneous, since the endpoint conditions and the continuity
conditions at the interior knots were already homogeneous linear equations.
Let $\widetilde{\sigma}$ be determined by an arbitrary solution of this
homogeneous system and let $\sigma\in\mathcal{S}_{k}\left(  \rho\right)  $ be
the unique extension of $\widetilde{\sigma}$ to a Beppo Levi $L_{k}$-spline.
Taking $\eta=\psi:=\sigma$ in (\ref{eq:ortho}), we obtain $R_{k}\sigma\left(
r\right)  =0$, \emph{i.e.} $\sigma\in\mathrm{span}\left\{  r^{-\left\vert
k\right\vert +2},r^{-\left\vert k\right\vert }\right\}  $, for $r\in\left(
0,\infty\right)  $. Since $\sigma\left(  r_{j}\right)  =0$, $j\in\left\{
1,\ldots,n\right\}  $, and $n\geq2$, we deduce $\sigma\equiv0$. Therefore the
above homogeneous system admits only the trivial solution, which concludes the proof.
\qed
\end{proof}

Theorem~\ref{thm:EU} also extends to the case $n=1$. Indeed, for each integer
$k\not =0$, it is straightforward to verify that there exists a unique
function $\varphi_{k}$ with the properties: $G_{k}\varphi_{k}\left(  r\right)
=0$ for $0<r<1$, $R_{k}\varphi_{k}\left(  r\right)  =0$ for $r>1$,
$\varphi_{k}$ is $C^{2}$-continuous at $r=1$, and $\varphi_{k}\left(
1\right)  =1$. Its expression%
\begin{equation}
\varphi_{k}\left(  r\right)  =\frac{1}{2}\left\{
\begin{array}
[c]{l}%
r^{\left\vert k\right\vert }\left[  \left(  1+\left\vert k\right\vert \right)
+\left(  1-\left\vert k\right\vert \right)  r^{2}\right]  ,\quad0\leq
r\leq1,\\
\\
r^{-\left\vert k\right\vert }\left[  \left(  1-\left\vert k\right\vert
\right)  +\left(  1+\left\vert k\right\vert \right)  r^{2}\right]  ,\quad1<r,
\end{array}
\right.  \label{eq:basis-fn}%
\end{equation}
was given in \cite[(3.10)]{MMA12} for $\left\vert k\right\vert \geq2$ and is
also seen to hold for $\left\vert k\right\vert =1$. Hence, if $\rho=\left\{
r_{1}\right\}  $, then $\sigma:=\nu_{1}\varphi_{k}\left(  \cdot/r_{1}\right)
$ is the unique Beppo Levi $L_{k}$-spline in $S_{k}\left(  \rho\right)  $,
such that $\sigma\left(  r_{1}\right)  =\nu_{1}$. As shown by the next result,
if $\left\vert k\right\vert \geq2$ and $n\geq2$, the dilates of $\varphi_{k}$
also provide a basis for a linear representation of the interpolant of
Theorem~\ref{thm:EU}.

\begin{theorem}
Assume that $\left\vert k\right\vert \geq2$, $n\geq2$, and let $\sigma$ be the
Beppo Levi $L_{k}$-spline satisfying the interpolation conditions
\emph{(\ref{eq:int-cond})} of Theorem~\ref{thm:EU} for given values $\nu_{1}$,
\ldots, $\nu_{n}$ at the knot-set $\rho$. Then there exist unique coefficients
$a_{1}$, \ldots, $a_{n}$, such that%
\begin{equation}
\sigma\left(  r\right)  =\sum\limits_{j=1}^{n}a_{j}\varphi_{k}\left(  \frac
{r}{r_{j}}\right)  ,\quad\forall r\geq0. \label{eq:rep}%
\end{equation}

\end{theorem}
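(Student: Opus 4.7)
The plan is to show that the linear map $\Phi\colon\mathbb{C}^{n}\to\mathcal{S}_{k}\left(\rho\right)$, $\Phi\left(a_{1},\ldots,a_{n}\right):=\sum_{j=1}^{n}a_{j}\varphi_{k}\left(\cdot/r_{j}\right)$, is a well-defined isomorphism. Composed with the point-evaluation isomorphism $\mathcal{S}_{k}\left(\rho\right)\cong\mathbb{C}^{n}$ supplied by Theorem~\ref{thm:EU}, this yields the representation~(\ref{eq:rep}), the coefficient vector being uniquely determined by the $n\times n$ linear system with collocation matrix $A:=\left[\varphi_{k}\left(r_{i}/r_{j}\right)\right]_{i,j=1}^{n}$.

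To verify $\varphi_{k}\left(\cdot/r_{j}\right)\in\mathcal{S}_{k}\left(\rho\right)$ for each $j$, I would exploit the scale invariance of the Euler operator $r\frac{\mathrm{d}}{\mathrm{d}r}$: the factorizations~(\ref{eq:Lk-fact1}) and~(\ref{eq:end-ops}) yield the intertwining identity $\lambda^{3}L_{k}\left[f\left(\cdot/\lambda\right)\right]\left(r\right)=\left(L_{k}f\right)\left(r/\lambda\right)$, with analogues for $G_{k}$ and $R_{k}$. Hence the defining properties of $\varphi_{k}$---namely $G_{k}\varphi_{k}=0$ on $\left(0,1\right)$, $R_{k}\varphi_{k}=0$ on $\left(1,\infty\right)$, and $C^{2}$-continuity at $1$---transfer under dilation to $G_{k}\left[\varphi_{k}\left(\cdot/r_{j}\right)\right]=0$ on $\left(0,r_{j}\right)$, $R_{k}\left[\varphi_{k}\left(\cdot/r_{j}\right)\right]=0$ on $\left(r_{j},\infty\right)$, and $C^{2}$-continuity at $r_{j}$. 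Combined with the inclusion $\ker G_{k}\cup\ker R_{k}\subset\ker L_{k}$ (immediate from the commuting Euler factors in~(\ref{eq:Lk-fact1})) and the real-analyticity of $\varphi_{k}$ away from $r=1$, this ensures that all three conditions of Definition~\ref{def:natLspline} hold.

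The heart of the argument is the non-singularity of $A$. Suppose $Ac=0$ and set $\sigma:=\Phi\left(c\right)\in\mathcal{S}_{k}\left(\rho\right)$; since $\sigma$ vanishes on $\rho$, Theorem~\ref{thm:EU} forces $\sigma\equiv 0$ on $\left[0,\infty\right)$. On any open subinterval $\left(r_{i},r_{i+1}\right)$ (with $r_{0}:=0$ and $r_{n+1}:=\infty$), formula~(\ref{eq:basis-fn}) tells us that $\varphi_{k}\left(r/r_{j}\right)$ follows its \emph{upper} branch when $j\geq i+1$ (so $r<r_{j}$) and its \emph{lower} branch when $j\leq i$ (so $r>r_{j}$). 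Substituting and regrouping powers of $r$ gives
\begin{eqnarray*}
\sigma\left(r\right) & = & \frac{1-\left\vert k\right\vert}{2}\,r^{-\left\vert k\right\vert}\!\sum_{j\leq i} a_{j}r_{j}^{\left\vert k\right\vert} + \frac{1+\left\vert k\right\vert}{2}\,r^{2-\left\vert k\right\vert}\!\sum_{j\leq i} a_{j}r_{j}^{\left\vert k\right\vert-2} \\
&   & + \frac{1+\left\vert k\right\vert}{2}\,r^{\left\vert k\right\vert}\!\sum_{j>i} a_{j}r_{j}^{-\left\vert k\right\vert} + \frac{1-\left\vert k\right\vert}{2}\,r^{\left\vert k\right\vert+2}\!\sum_{j>i} a_{j}r_{j}^{-\left\vert k\right\vert-2}.
\end{eqnarray*}
For $\left\vert k\right\vert\geq 2$ the four exponents are pairwise distinct and the prefactors $\frac{1\pm\left\vert k\right\vert}{2}$ are nonzero, so $\sigma\equiv 0$ on $\left(r_{i},r_{i+1}\right)$ forces each partial sum above to vanish. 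Focusing on the first, we obtain $\sum_{j\leq i}a_{j}r_{j}^{\left\vert k\right\vert}=0$ for every $i\in\left\{0,1,\ldots,n\right\}$; subtracting consecutive constraints telescopes to $a_{i}r_{i}^{\left\vert k\right\vert}=0$, hence $a_{i}=0$ for all $i$.

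The main technical obstacle is the bookkeeping on each subinterval: correctly identifying the active branch of~(\ref{eq:basis-fn}) for each dilate according to the position of $r_{j}$ relative to $\left(r_{i},r_{i+1}\right)$, and then invoking $\left\vert k\right\vert\geq 2$ both to keep the four monomial exponents distinct and to ensure all four prefactors $\frac{1\pm\left\vert k\right\vert}{2}$ are nonzero---without either of these, the telescoping cascade that extracts $a_{i}=0$ would collapse.
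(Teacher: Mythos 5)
Your proposal is correct, and it is essentially self-contained, which is more than the paper offers: the paper gives no proof of this theorem, deferring instead to \cite[Lemma~3]{MMA12} with the remark that the argument there carries over from Lagrange to the general interpolation conditions. All the load-bearing steps in your version check out. The dilation-equivariance $\lambda^{3}L_{k}[f(\cdot/\lambda)](r)=(L_{k}f)(r/\lambda)$ (and its analogues for $G_{k}$, $R_{k}$) follows from the commutation of $r\frac{\mathrm{d}}{\mathrm{d}r}$ with dilations together with the $r^{-3}$ prefactors, and it does place each dilate $\varphi_{k}(\cdot/r_{j})$ in $\mathcal{S}_{k}(\rho)$, since $\mathrm{Ker}\,G_{k}$ and $\mathrm{Ker}\,R_{k}$ are both contained in $\mathrm{Ker}\,L_{k}$ by (\ref{eq:Lk-fact1}) and (\ref{eq:ker}). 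The branch bookkeeping on $(r_{i},r_{i+1})$ and the resulting four-monomial expansion are correct, the exponents $-|k|,\,2-|k|,\,|k|,\,|k|+2$ are pairwise distinct and the prefactors $(1\pm|k|)/2$ nonzero precisely because $|k|\geq2$ (this is exactly where $|k|=1$ fails, consistent with the paper's remark that (\ref{eq:rep}) does not hold there), and the telescoping of $\sum_{j\leq i}a_{j}r_{j}^{|k|}=0$ over $i$ extracts $a_{i}=0$. One small observation: the paper's own Remark on $\psi_{k}(t)=e^{-t}\varphi_{k}(e^{t})$ being positive definite offers a shortcut to the nonsingularity of the collocation matrix, since $[\varphi_{k}(r_{i}/r_{j})]_{i,j}$ is diagonally similar to the positive definite matrix $[\psi_{k}(\ln r_{i}-\ln r_{j})]_{i,j}$; your route avoids invoking Fourier positivity and instead leans on Theorem~\ref{thm:EU}, which is equally legitimate and arguably more elementary.
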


\noindent This result was established in \cite[Lemma 3]{MMA12} for the
special case in which $\sigma$ satisfies Lagrange interpolation conditions.
The proof given there also applies to our general interpolation conditions
(\ref{eq:int-cond}). Note that representation (\ref{eq:rep}) does not hold for
$\left\vert k\right\vert =1$, but a similar representation for $k=0$ appears
in \cite[Theorem~4]{RTPS}.

\begin{remark}
For $\left\vert k\right\vert \geq2$, it was observed in \cite{MMA12} that, if we make 
the notation
\[
\psi _{k}\left( t\right) :=e^{-t}\varphi _{k}\left( e^{t}\right) =\frac{1}{2}%
e^{-\left\vert k\right\vert \left\vert t\right\vert }\left[ \left(
1-\left\vert k\right\vert \right) e^{-\left\vert t\right\vert }+\left(
1+\left\vert k\right\vert \right) e^{\left\vert t\right\vert }\right] ,\quad
t\in \mathbb{R},
\]
then $\psi _{k}$ is a positive definite function, due to its positive Fourier transform
\[
\widehat{\psi }_{k}\left( \tau \right) =\int_{-\infty }^{\infty }e^{-it\tau
}\psi _{k}\left( t\right) dt=\frac{4\left\vert k\right\vert \left(
k^{2}-1\right) }{\left[ \left( \left\vert k\right\vert -1\right) ^{2}+\tau
^{2}\right] \left[ \left( \left\vert k\right\vert +1\right) ^{2}+\tau ^{2}%
\right] },\quad \tau \in \mathbb{R}.
\]
This formula shows that $\psi _{k}$ belongs to 
the class of generalized Whittle-Mat\'{e}rn-Sobolev kernels recently studied by 
Bozzini, Rossini, and Schaback \cite{BRS13}.
\end{remark}

The next result shows that our Beppo Levi $L_{k}$-spline interpolants minimize
the functional (\ref{eq:k-norm}), subject to the interpolation conditions.

\begin{theorem}
\label{thm:var-char}Given $k\not =0$ and arbitrary real values $\nu_{1}$,
$\nu_{2}$,\ldots, $\nu_{n}$, let $\sigma$ denote the unique Beppo Levi $L_{k}%
$-spline obtained in Theorem~\ref{thm:EU}. Then $\left\Vert \sigma\right\Vert
_{k}<\left\Vert g\right\Vert _{k}$ whenever $g$ satisfies the same
interpolation conditions \emph{(\ref{eq:int-cond})} as $\sigma$ and
$g\not =\sigma$, where $g\in\Lambda_{1}$ if $\left\vert k\right\vert =1$,
while $g\in\Lambda_{2}$ if $\left\vert k\right\vert \geq2$.
\end{theorem}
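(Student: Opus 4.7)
The plan is to apply the fundamental identity from Theorem~\ref{thm:FI} via a standard Dirichlet-type argument. Set $\psi:=g-\sigma$. Then $\psi$ belongs to the same space ($\Lambda_{1}$ or $\Lambda_{2}$) as $g$, and the common interpolation conditions yield $\psi\left(r_{j}\right)=0$ for every $j\in\{1,\ldots,n\}$, so $\psi$ is admissible in Theorem~\ref{thm:FI}. Polarizing the quadratic form $\|\cdot\|_{k}^{2}$ gives
\[
\|g\|_{k}^{2}=\|\sigma\|_{k}^{2}+2\,\mathrm{Re}\,\langle\sigma,\psi\rangle_{k}+\|\psi\|_{k}^{2},
\]
so the heart of the argument is to show that the cross term $\langle\sigma,\psi\rangle_{k}$ vanishes.

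To do this, I would integrate by parts in $\langle\sigma,\psi\rangle_{k}$ on each subinterval $(r_{j-1},r_{j})$, with $r_{0}:=0$ and $r_{n+1}:=\infty$, using the Euler-Lagrange calculation that produced $L_{k}$ together with the factorization $L_{k}=G_{k}r^{3}R_{k}$. The interior volume integrals pick up the factor $L_{k}\sigma$, which vanishes identically on each subinterval because $\sigma$ is piecewise in $\mathrm{Ker}\,L_{k}$ there (including the extreme intervals, since $\mathrm{Ker}\,G_{k}$ and $\mathrm{Ker}\,R_{k}$ are both contained in $\mathrm{Ker}\,L_{k}$). Regrouping the surviving contributions via the adjointness $G_{k}^{\ast}=R_{k}$ from the Remark, one obtains an identity of the form
\[
\langle\sigma,\psi\rangle_{k}=\int_{0}^{\infty}r^{3}\,(R_{k}\sigma)\,\overline{(R_{k}\psi)}\,\mathrm{d}r + B,
\]
where $B$ collects boundary terms at $0$ and $\infty$ together with jump terms across the interior knots. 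The volume integral is precisely the quantity proved to vanish in Theorem~\ref{thm:FI}. The jump terms at each interior knot vanish because $\sigma\in C^{2}$ eliminates jumps of $\sigma,\sigma^{\prime},\sigma^{\prime\prime}$, while any surviving jump of $\sigma^{\prime\prime\prime}$ is paired with $\psi(r_{j})=0$. The limit as $r\to\infty$ vanishes because $R_{k}\sigma\equiv 0$ on $(r_{n},\infty)$ confines $\sigma$ to $\mathrm{span}\{r^{-\left\vert k\right\vert +2},r^{-\left\vert k\right\vert }\}$ there, which balanced against Lemma~\ref{le:aux}'s bounds on $\psi,\psi^{\prime}$ forces the relevant products to tend to $0$. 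The limit as $r\to 0$ vanishes for the analogous reason: $G_{k}\sigma\equiv 0$ on $(0,r_{1})$ pins down $\sigma\in\mathrm{span}\{r^{\left\vert k\right\vert +2},r^{\left\vert k\right\vert }\}$, and Lemma~\ref{le:aux} again supplies the needed decay.

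Once $\langle\sigma,\psi\rangle_{k}=0$ is established, the expansion collapses to the Pythagorean identity $\|g\|_{k}^{2}=\|\sigma\|_{k}^{2}+\|\psi\|_{k}^{2}$, giving $\|g\|_{k}\geq\|\sigma\|_{k}$ with equality iff $\|\psi\|_{k}=0$. For $\left\vert k\right\vert \geq 2$, $\|\cdot\|_{k}$ is a genuine norm on $\Lambda_{2}$, so $\psi\equiv 0$. For $\left\vert k\right\vert =1$, a short direct computation shows that the null space of the seminorm $\|\cdot\|_{1}$ on $\Lambda_{1}$ consists exactly of the multiples of the function $r\mapsto r$; since $\psi(r_{1})=0$, the multiplier must be zero and again $\psi\equiv 0$. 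In either case $g=\sigma$, yielding the required strict inequality.

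The main obstacle will be the careful verification that the boundary contributions $B$ actually vanish. This is tightest for $\left\vert k\right\vert =1$, where $\sigma$ may carry a non-decaying component $\sim r$ on $(r_{n},\infty)$ and $\psi$ only enjoys the logarithmic growth bound of Lemma~\ref{le:aux}(ii); the pairing with the specific vanishing imposed by the adjoint conditions $G_{k}\sigma\equiv 0$ and $R_{k}\sigma\equiv 0$ on the extreme intervals must be tracked term by term, which is where the real work lies.
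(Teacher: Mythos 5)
Your proposal follows essentially the same route as the paper: set $\psi:=g-\sigma$, show that the cross term $\langle\sigma,\psi\rangle_{k}$ vanishes by identifying the inner product with the bilinear form $\int_{0}^{\infty}r^{3}\left(R_{k}\sigma\right)\overline{\left(R_{k}\psi\right)}\,\mathrm{d}r$ handled in Theorem~\ref{thm:FI} (the paper imports this identity from the proof of \cite[Formula (5.3)]{MMA12} rather than re-deriving it), and then conclude via the Pythagorean identity together with the observation that the null space of the seminorm $\left\Vert\cdot\right\Vert_{1}$ on $\Lambda_{1}$ is $\mathrm{span}\{r\}$, which is killed by the vanishing at the knots. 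One small caution: your integration-by-parts sketch cannot simultaneously retain the residual integral $\int_{0}^{\infty}r^{3}\left(R_{k}\sigma\right)\overline{\left(R_{k}\psi\right)}\,\mathrm{d}r$ and a volume term carrying $L_{k}\sigma$ --- the identity $\langle\sigma,\psi\rangle_{k}=\int_{0}^{\infty}r^{3}\left(R_{k}\sigma\right)\overline{\left(R_{k}\psi\right)}\,\mathrm{d}r$ is obtained by redistributing derivatives (with boundary terms controlled by Lemma~\ref{le:aux}) without invoking $L_{k}\sigma=0$ at all; the piecewise annihilation by $L_{k}$ is what Theorem~\ref{thm:FI} then exploits to show that this integral is zero.
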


\begin{proof}
Letting $\eta:=\sigma$, $\psi:=g-\sigma$, the hypotheses imply that $\psi$
satisfies (\ref{eq:zero-data}), hence (\ref{eq:ortho}) holds by
Theorem~\ref{thm:FI}. Since $\psi^{\prime}\in AC_{\mathrm{loc}}\left(
0,\infty\right)  $ and $\psi\in\Lambda_{1}$ if $\left\vert k\right\vert =1$,
while $\psi\in\Lambda_{2}$ if $\left\vert k\right\vert \geq2$, we can use the
proof of \cite[Formula (5.3)]{MMA12} for $k\not =0$ to show that%
\[
\int_{0}^{\infty}r^{3}\left[  R_{k}\eta\left(  r\right)  \right]  \left[
R_{k}\overline{\psi}\left(  r\right)  \right]  \mathrm{d}r=\left\langle
\eta,\psi\right\rangle _{k},
\]
where%
\begin{eqnarray*}
\left\langle \eta,\psi\right\rangle _{k}  &  := & \int_{0}^{\infty}\left\{
\eta^{\prime\prime}\overline{\psi}^{\prime\prime}+2k^{2}\left[  \frac{\eta
}{r^{2}}-\frac{\eta^{\prime}}{r}\right]  \left[  \frac{\overline{\psi}}{r^{2}%
}-\frac{\overline{\psi}^{\prime}}{r}\right]  \right. \\
&  & \left.  +\left[  \frac{k^{2}\eta}{r^{2}}-\frac{\eta^{\prime}}{r}\right]
\left[  \frac{k^{2}\overline{\psi}}{r^{2}}-\frac{\overline{\psi}^{\prime}}%
{r}\right]  \right\}  r\,\mathrm{d}r.
\end{eqnarray*}
Therefore (\ref{eq:ortho}) implies the orthogonality property%
\[
\left\langle \sigma,g-\sigma\right\rangle _{k}=0,
\]
from which%
\begin{equation}
\left\Vert g\right\Vert _{k}^{2}=\left\Vert \sigma\right\Vert _{k}%
^{2}+\left\Vert g-\sigma\right\Vert _{k}^{2}, \label{eq:pythagoras}%
\end{equation}
and $\left\Vert g\right\Vert _{k}\geq\left\Vert \sigma\right\Vert _{k}$, with
equality only if $\left\Vert g-\sigma\right\Vert _{k}=0$. The last relation
implies $g\equiv\sigma$ if $\left\vert k\right\vert \geq2$, since $\left\Vert
\cdot\right\Vert _{k}$ is a norm in this case. If $\left\vert k\right\vert
=1$, the semi-norm $\left\Vert g-\sigma\right\Vert _{k}$ vanishes if and only
if $g\left(  r\right)  -\sigma\left(  r\right)  =ar$, $\forall r\in\left(
0,\infty\right)  $, for some constant $a$. Since $g-\sigma$ takes zero values
at the knots $r_{1}$,\ldots, $r_{n}$, we deduce again $g\equiv\sigma$, which
completes the proof.
\qed
\end{proof}

\section{Approximation orders}

For each $k\not =0$, the following result establishes $L^{\infty}$ and $L^{2}%
$-error bounds for interpolation with Beppo Levi $L_{k}$-splines to data
functions from $\Lambda_{1}$ or $\Lambda_{2}$.

\begin{theorem}
\label{thm:AO-Lspl}Let $\rho:=\left\{  r_{1},\ldots,r_{n}\right\}  $ be a set
of nodes with $0<r_{1}<\ldots<r_{n}$, $n\geq2$, and $h:=\max_{1\leq j\leq
n-1}\left(  r_{j+1}-r_{j}\right)  $. For an integer $k\not =0$, let $g:\left(
0,\infty\right)  \rightarrow\mathbb{R}$ be a data function such that
$g\in\Lambda_{1}$ if $\left\vert k\right\vert =1$, while $g\in\Lambda_{2}$ if
$\left\vert k\right\vert \geq2$. Let $\sigma\in\mathcal{S}_{k}\left(
\rho\right)  $ be the Beppo Levi $L_{k}$-spline of Theorem~\ref{thm:EU},
corresponding to data values $\nu_{j}:=g\left(  r_{j}\right)  $, $1\leq j\leq
n$. Then, for $m\in\left\{  0,1\right\}  $, we have the error bounds:
\begin{equation}
\left\Vert \frac{d^{m}}{dr^{m}}\left(  g-\sigma\right)  \right\Vert
_{L^{\infty}\left[  r_{1},r_{n}\right]  }\leq\frac{1}{2^{1-m}\sqrt{r_{1}}%
}h^{3/2-m}\left\Vert g\right\Vert _{k}, \label{eq:Linfty}%
\end{equation}%
\begin{equation}
\left\Vert \frac{d^{m}}{dr^{m}}\left(  g-\sigma\right)  \right\Vert
_{L^{2}\left[  r_{1},r_{n}\right]  }\leq\frac{1}{2^{1-m}\sqrt{r_{1}}}%
h^{2-m}\left\Vert g\right\Vert _{k}. \label{eq:L2}%
\end{equation}

\end{theorem}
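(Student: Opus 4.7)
The plan is to derive all four bounds from a single weighted $L^2$-estimate on $e'' := g'' - \sigma''$, where $e := g - \sigma$. First, by Theorem~\ref{thm:var-char}---more precisely, the Pythagorean identity (\ref{eq:pythagoras}) established in its proof---the error satisfies $\|e\|_k \leq \|g\|_k$. Inspecting the definition (\ref{eq:k-norm}) of $\|\cdot\|_k$ and restricting the integration domain to $[r_1, r_n]$, where $r \geq r_1$, I would extract the master estimate
\[
\int_{r_1}^{r_n} |e''(r)|^2\, dr \leq \frac{1}{r_1} \int_0^\infty |e''(r)|^2\, r\, dr \leq \frac{1}{r_1} \|e\|_k^2 \leq \frac{1}{r_1} \|g\|_k^2.
\]
All four stated bounds will follow from this inequality, using only the interpolation conditions $e(r_j) = 0$ for $1 \leq j \leq n$ and standard piecewise Cauchy--Schwarz arguments.

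Next, I would fix $j \in \{1, \ldots, n-1\}$. Rolle's theorem applied to $e$ on $[r_j, r_{j+1}]$ yields some $\xi_j \in (r_j, r_{j+1})$ with $e'(\xi_j) = 0$. For every $r \in [r_j, r_{j+1}]$, writing $e'(r) = \int_{\xi_j}^r e''(t)\, dt$ and applying Cauchy--Schwarz together with $r_{j+1} - r_j \leq h$ produces the pointwise bound $|e'(r)|^2 \leq h \int_{r_j}^{r_{j+1}} |e''(t)|^2\, dt$. Taking suprema and combining with the master estimate delivers the $L^\infty$ bound for $m = 1$. Integrating the same local inequality in $r$ over the subinterval gives $\int_{r_j}^{r_{j+1}} |e'|^2\, dr \leq h^2 \int_{r_j}^{r_{j+1}} |e''|^2\, dr$, and summation over $j$ then produces the $L^2$ bound for $m = 1$.

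For the $m = 0$ bounds on $e$ itself, I would exploit once more that $e$ vanishes at both endpoints of each subinterval. Writing $e(r) = \int_{r_j}^r e'(t)\, dt = -\int_r^{r_{j+1}} e'(t)\, dt$ for $r \in [r_j, r_{j+1}]$ gives $|e(r)| \leq \min(r - r_j,\, r_{j+1} - r)\, \|e'\|_{L^\infty[r_j, r_{j+1}]}$, hence $\|e\|_{L^\infty[r_1, r_n]} \leq (h/2)\, \|e'\|_{L^\infty[r_1, r_n]}$, which combines with the previous step to yield the $L^\infty$ bound for $m = 0$. Analogously, Cauchy--Schwarz yields $|e(r)|^2 \leq \min(r - r_j,\, r_{j+1} - r) \int_{r_j}^{r_{j+1}} |e'|^2\, dt$; then the elementary identity $\int_{r_j}^{r_{j+1}} \min(r - r_j,\, r_{j+1} - r)\, dr = (r_{j+1} - r_j)^2/4$ and summation over $j$ deliver the $L^2$ bound for $m = 0$.

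The argument is essentially routine once the variational bound is in hand; the main technical care lies in tracking constants. In particular, the factor $1/\sqrt{r_1}$ must be introduced exactly once, at the step where the weighted integral $\int |e''|^2\, r\, dr$ is converted to the unweighted integral on $[r_1, r_n]$. Afterwards, the four bounds differ from one another only by elementary Cauchy--Schwarz factors of $h^{1/2}$ and by the factor $1/2$ arising from $\min(r - r_j, r_{j+1} - r) \leq h/2$ together with its integral $(r_{j+1} - r_j)^2/4 \leq h^2/4$.
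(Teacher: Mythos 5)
Your proposal is correct, and it is precisely the classical generalized-spline error analysis (first integral relation $\|g-\sigma\|_k\le\|g\|_k$ via the Pythagorean identity, then Rolle plus piecewise Cauchy--Schwarz) that the paper invokes by reference to \cite{RTPS} and \cite{ANW67}, with the weight $r\ge r_1$ supplying the $1/\sqrt{r_1}$ exactly as you describe. You have simply written out in full the argument the paper cites, and your constants match the stated bounds.
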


\begin{proof}
Similar error bounds for $k=0$ were obtained in \cite[Theorems 5 \& 6]{RTPS},
along the lines of the classical error analysis for generalized splines
\cite{ANW67}. The same arguments are also seen to apply to the present case
$k\not =0$, by replacing the semi-norm $\left\Vert \cdot\right\Vert _{0}$ of
\cite{RTPS} with $\left\Vert \cdot\right\Vert _{k}$ and using the inequality
$\int_{0}^{\infty}r\left\vert g^{\prime\prime}\left(  r\right)  \right\vert
^{2}\mathrm{d}r\leq\left\Vert g\right\Vert _{k}^{2}$, valid for any data
function $g$ as in the hypothesis.
\qed
\end{proof}

\begin{remark}
As in \cite{RTPS}, the bounds (\ref{eq:Linfty}) and (\ref{eq:L2}) also imply an $L^{p}%
$-error bound for $p\in\left(  2,\infty\right)  $. Moreover, a similar
analysis to that of \cite[Theorem 7]{RTPS} shows that the exponents of $h$ in
the above error bounds cannot be increased for the classes $\Lambda_{1}$ and
$\Lambda_{2}$ of data functions.
\end{remark}

The main result of this section applies (\ref{eq:L2}) and the corresponding
error bound of \cite{RTPS} for $k=0$ to obtain a $L^{2}$-convergence order for
transfinite surface interpolation with biharmonic Beppo Levi polysplines on
annuli. To state this result, let $W^{2}$ be the Wiener-type algebra of
continuous periodic functions $\mu:\left[  -\pi,\pi\right]  \rightarrow
\mathbb{R}$ with Fourier coefficients $\widehat{\mu}_{k}$, $k\in\mathbb{Z}$,
such that $\sum_{k\in\mathbb{Z}}\left\vert \widehat{\mu}_{k}\right\vert
\left(  1+\left\vert k\right\vert \right)  ^{2}<\infty$. Note that
$W^{2}\subset C^{2}\left[  -\pi,\pi\right]  $ and, as observed in
\cite[Remark~1]{MMA12}, any periodic cubic spline belongs to $W^{2}$.

\begin{theorem}
\label{thm:AO-poly}Given $F\in C^{2}\left(  \mathbb{R}^{2}-\left\{  0\right\}
\right)  \cap C\left(  \mathbb{R}^{2}\right)  $ of polar form $f$ such that
\emph{(\ref{eq:Du-polar})} is finite, assume that $f\left(  r_{j}%
,\cdot\right)  \in W^{2}$ along each domain circle $r=r_{j}$, $j\in\left\{
1,\ldots,n\right\}  $. Let $S$ be either one of the Beppo Levi polysplines
$S^{A}$ or $S^{B}$ determined in \emph{\cite[Theorem~1]{MMA12},} satisfying
the transfinite interpolation conditions \emph{(\ref{eq:transfinite})} for
$\mu_{j}:=f\left(  r_{j},\cdot\right)  $, $j\in\left\{  1,\ldots,n\right\}  $,
where also $S^{A}\left(  0\right)  =F\left(  0\right)  $ and $S^{B}$ is
biharmonic at $0$. Then, for $m\in\left\{  0,1\right\}  $, we have the $L^{2}%
$-error bound%
\begin{equation}
\left(  \int_{r_{1}}^{r_{n}}\int_{-\pi}^{\pi}\left\vert \frac{\partial^{m}%
}{\partial r^{m}}\left(  f-s\right)  \left(  r,\theta\right)  \right\vert
^{2}r\,\mathrm{d}\theta\,\mathrm{d}r\right)  ^{1/2}\leq2^{m-1}\sqrt
{\frac{r_{n}}{r_{1}}}\,h^{2-m}\left\Vert f\right\Vert _{BL}. \label{eq:AO-BL}%
\end{equation}

\end{theorem}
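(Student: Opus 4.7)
The plan is to reduce the bivariate error estimate \eqref{eq:AO-BL} to a frequency-by-frequency application of the univariate error bound \eqref{eq:L2}, combined with Parseval's identity in $\theta$ and the Plancherel-type identity \eqref{eq:Plancherel}.

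First, I expand both $f$ and $s$ as absolutely convergent Fourier series in $\theta$, as in \eqref{eq:F-series}. By the construction of the Beppo Levi polysplines $S^{A}$ and $S^{B}$ recalled from \cite{MMA12}, for each $k\neq 0$ the Fourier coefficient $\widehat{s}_{k}$ is precisely the Beppo Levi $L_{k}$-spline on $\rho$ interpolating the data $\widehat{f}_{k}(r_{j})$, $1\leq j\leq n$; the $k=0$ component, which is where $S^{A}$ and $S^{B}$ differ, is analogously a Beppo Levi $L_{0}$-spline interpolant in the sense of \cite{RTPS}. The hypothesis $f(r_{j},\cdot)\in W^{2}$ for each $j$ ensures the decay of Fourier coefficients needed to justify termwise evaluation and differentiation in $r$ for $m\in\{0,1\}$.

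Second, I apply Parseval's identity in $\theta$ to obtain, for each $r\in[r_{1},r_{n}]$,
\[
\int_{-\pi}^{\pi}\left|\frac{\partial^{m}}{\partial r^{m}}(f-s)(r,\theta)\right|^{2}d\theta=2\pi\sum_{k\in\mathbb{Z}}\left|\frac{d^{m}}{dr^{m}}(\widehat{f}_{k}-\widehat{s}_{k})(r)\right|^{2}.
\]
Multiplying by $r$ and integrating, then using the pointwise bound $r\leq r_{n}$ on $[r_{1},r_{n}]$, yields
\[
\int_{r_{1}}^{r_{n}}\!\!\int_{-\pi}^{\pi}\left|\partial_{r}^{m}(f-s)\right|^{2}r\,d\theta\,dr\leq 2\pi r_{n}\sum_{k\in\mathbb{Z}}\left\|\frac{d^{m}}{dr^{m}}(\widehat{f}_{k}-\widehat{s}_{k})\right\|_{L^{2}[r_{1},r_{n}]}^{2}.
\]
For each $k\neq 0$, bound \eqref{eq:L2} of Theorem~\ref{thm:AO-Lspl} gives the inner norm by $2^{m-1}r_{1}^{-1/2}h^{2-m}\|\widehat{f}_{k}\|_{k}$, and the analogous result in \cite{RTPS} supplies the same estimate for $k=0$. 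Squaring and summing, then invoking the Plancherel-type identity \eqref{eq:Plancherel} to rewrite $\sum_{k}\|\widehat{f}_{k}\|_{k}^{2}=(2\pi)^{-1}\|f\|_{BL}^{2}$, produces
\[
\int_{r_{1}}^{r_{n}}\!\!\int_{-\pi}^{\pi}\left|\partial_{r}^{m}(f-s)\right|^{2}r\,d\theta\,dr\leq \frac{r_{n}}{4^{1-m}r_{1}}h^{2(2-m)}\|f\|_{BL}^{2},
\]
which is exactly \eqref{eq:AO-BL} after taking square roots.

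The main obstacle is justifying the identification of $\widehat{s}_{k}$ with the Beppo Levi $L_{k}$-spline interpolant of $\widehat{f}_{k}$ and, in particular, the termwise interchange of summation with the radial integration and differentiation. The Wiener-algebra decay $|\widehat{\mu}_{j,k}|=O((1+|k|)^{-2})$ supplied by $\mu_{j}\in W^{2}$, combined with the uniform $L_{k}$-spline structure described in Section~2.2, should render all the relevant series absolutely convergent and permit these interchanges; the rest of the argument is a clean Parseval computation.
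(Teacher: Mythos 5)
Your proposal is correct and follows essentially the same route as the paper: Parseval in $\theta$ at each fixed $r$, Fubini to interchange the $k$-sum with the radial integration, the bound $r\leq r_{n}$, termwise application of \eqref{eq:L2} for $k\neq 0$ together with the $k=0$ bound from \cite{RTPS}, and finally the Plancherel identity \eqref{eq:Plancherel} to resum the energies. The interchange you worry about at the end is handled in the paper exactly as you suggest, via continuity of $\sqrt{r}\,\partial_{r}^{m}(f-s)$ on the compact domain and Fubini's theorem, so there is no gap.
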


\begin{proof}
For each $r\geq0$, let $\widehat{f}_{k}\left(  r\right)  $, $k\in\mathbb{Z}$,
be the Fourier coefficients of $f\left(  r,\theta\right)  $ with respect to
$\theta$. The smoothness assumptions on $F$ imply that $\widehat{f}_{k}\in
C^{2}\left(  0,\infty\right)  $, $\widehat{f}_{k}$ is continuous at $r=0$,
$\forall k\in\mathbb{Z}$, and identity (\ref{eq:Plancherel}) holds. Since
$\frac{\partial^{m}}{\partial r^{m}}\left(  f-s\right)  \left(  r,\cdot
\right)  \in C\left[  -\pi,\pi\right]  \subset L^{2}\left[  -\pi,\pi\right]
$, the following Plancherel formula is also valid for $m\in\left\{
0,1\right\}  $ and $r\in\left[  r_{1},r_{n}\right]  $:%
\[
\frac{1}{2\pi}\int_{-\pi}^{\pi}\left\vert \frac{\partial^{m}}{\partial r^{m}%
}\left(  f-s\right)  \left(  r,\theta\right)  \right\vert ^{2}\mathrm{d}%
\theta=\sum_{k\in\mathbb{Z}}\left\vert \frac{d^{m}}{dr^{m}}\left(  \widehat
{f}_{k}-\widehat{s}_{k}\right)  \left(  r\right)  \right\vert ^{2}.
\]
Moreover, since $\sqrt{r}\,\frac{\partial^{m}}{\partial r^{m}}\left(
f-s\right)  \in C\left(  \left[  r_{1},r_{n}\right]  \times\left[  -\pi
,\pi\right]  \right)  \subset L^{2}\left(  \left[  r_{1},r_{n}\right]
\times\left[  -\pi,\pi\right]  \right)  $, we may multiply the above relation
by $r$ and integrate both sides to obtain, via Fubini's theorem,%
\begin{equation}
\frac{1}{2\pi}\int_{r_{1}}^{r_{n}}\int_{-\pi}^{\pi}\left\vert \frac
{\partial^{m}}{\partial r^{m}}\left(  f-s\right)  \left(  r,\theta\right)
\right\vert ^{2}r\,\mathrm{d}\theta\,\mathrm{d}r=\sum_{k\in\mathbb{Z}}%
\int_{r_{1}}^{r_{n}}\left\vert \frac{d^{m}}{dr^{m}}\left(  \widehat{f}%
_{k}-\widehat{s}_{k}\right)  \left(  r\right)  \right\vert ^{2}r\,\mathrm{d}r.
\label{eq:Fubini}%
\end{equation}

Note that, for each $j\in\left\{  1,\ldots,n\right\}  $, the transfinite
interpolation condition $s\left(  r_{j},\theta\right)  =f\left(  r_{j}%
,\theta\right)  $, $\forall\theta\in\left[  -\pi,\pi\right]  $, is equivalent
to $\widehat{s}_{k}\left(  r_{j}\right)  =\widehat{f}_{k}\left(  r_{j}\right)
$, $\forall k\in\mathbb{Z}$. Hence, for $k\not =0$, the error bound
(\ref{eq:L2}) implies, for $m\in\left\{  0,1\right\}  $,%
\begin{equation}
\left\Vert \frac{d^{m}}{dr^{m}}\left(  \widehat{f}_{k}-\widehat{s}_{k}\right)
\right\Vert _{L^{2}\left(  \left[  r_{1},r_{n}\right]  \right)  }\leq\frac
{1}{2^{1-m}\sqrt{r_{1}}}h^{2-m}\left\Vert \widehat{f}_{k}\right\Vert _{k}.
\label{eq:L2-k}%
\end{equation}
In addition, it follows from \cite[Theorem~6]{RTPS} that this error bound also
holds for $k=0$, since $\widehat{s}_{0}^{A}\left(  0\right)  =\widehat{f}%
_{0}\left(  0\right)  =F\left(  0\right)  $ and $\widehat{s}_{0}^{B}%
\in\mathrm{span}\left\{  r^{2},1\right\}  $ for $r\in\left(  0,r_{1}\right)  $.

Therefore (\ref{eq:Fubini}), (\ref{eq:L2-k}), and (\ref{eq:Plancherel}) imply%
\begin{eqnarray*}
\lefteqn{  \frac{1}{2\pi}\int_{r_{1}}^{r_{n}}\int_{-\pi}^{\pi}\left\vert
\frac{\partial^{m}}{\partial r^{m}}\left(  f-s^{A,B}\right)  \left(
r,\theta\right)  \right\vert ^{2}r\,\mathrm{d}\theta\,\mathrm{d}r } \\
&  \leq & r_{n}\sum_{k\in\mathbb{Z}}\int_{r_{1}}^{r_{n}}\left\vert \frac{d^{m}%
}{dr^{m}}\left(  \widehat{f}_{k}-\widehat{s}_{k}\right)  \left(  r\right)
\right\vert ^{2}\mathrm{d}r\\
&  \leq & Ch^{2\left(  2-m\right)  }\sum_{k\in\mathbb{Z}}\left\Vert \widehat
{f}_{k}\right\Vert _{k}^{2}=\frac{C}{2\pi}h^{2\left(  2-m\right)  }\left\Vert
f\right\Vert _{BL}^{2},
\end{eqnarray*}
where $C=2^{2\left(  m-1\right)  }r_{n}/r_{1}$, which establishes
(\ref{eq:AO-BL}).\medskip
\qed
\end{proof}

A similar approximation order for transfinite interpolation via biharmonic
Beppo Levi polysplines on parallel strips has recently been proved in
\cite{IMA13}. Related Plancherel representations of the error have been
employed before by Kounchev and Render \cite{KR03} for cardinal polysplines on
annuli and by Sharon and Dyn \cite{SD12} for interpolatory subdivision schemes.


\begin{thebibliography}{99}

\bibitem {ANW67}J.H. Ahlberg, E.N. Nilson, J.L. Walsh, The Theory of Splines
and Their Applications, Academic Press, New York, 1967.

\bibitem {MSc}R.S. Al-Sahli, $L$-spline Interpolation and Biharmonic
Polysplines on Annuli, MSc Thesis, Kuwait University (2012).

\bibitem {JAT08}A. Bejancu, Semi-cardinal polyspline interpolation with Beppo
Levi boundary conditions, J. Approx.\ Theory 155 (2008), 52--73.

\bibitem {CA11}A. Bejancu, Transfinite thin plate spline interpolation,
Constr.\ Approx.\ 34 (2011), 237--256.

\bibitem {MMA12}A. Bejancu, Thin plate splines for transfinite interpolation
at concentric circles, Math.\ Model.\ Anal.\ 18 (2013), 446--460.

\bibitem {IMA13}A. Bejancu, Beppo Levi polyspline surfaces, in: The
Mathematics of Surfaces XIV, R.J. Cripps, G. Mullineux, M.A. Sabin (Eds.), The
Institute of Mathematics and its Applications, UK, 2013; ISBN 978-0-905091-30-3.

\bibitem {RTPS}A. Bejancu, Radially symmetric thin plate splines interpolating
a circular contour map, J. Comput.\ Appl.\ Math.\ 292 (2016), 7--22.

\bibitem {BS88}C. Bennett, R. Sharpley, Interpolation of Operators, Academic
Press, Boston, 1988.

\bibitem {BRS13}M. Bozzini, M. Rossini, R. Schaback, Generalized Whittle-Mat\'{e}rn
and polyharmonic kernels, Adv.\ Comput.\ Math.\ 39 (2013), 129--141.

\bibitem {CFM14}R. Cavoretto, G.E. Fasshauer, M.J. McCourt, An introduction to
the Hilbert-Schmidt SVD using iterated Brownian bridge kernels,
Numer.\ Alg.\ 68 (2015), 393--422.

\bibitem {Du76}J. Duchon, Interpolation des fonctions de deux variables
suivant le principe de la flexion des plaques minces, RAIRO Anal.\ Numer.\ 10
(1976), 5--12.

\bibitem {JP72}J. Jerome, J. Pierce, On spline functions determined by
singular self-adjoint differential operators, J. Approx.\ Theory 5 (1972), 15--40.

\bibitem {KZ66}S. Karlin, Z. Ziegler, Tchebycheffian spline functions, SIAM J.
Numer.\ Anal.\ 3 (1966), 514--543.

\bibitem {Ku01}O.I. Kounchev, Multivariate Polysplines. Applications to
Numerical and Wavelet Analysis, Academic Press, London, 2001.

\bibitem {KR03}O.I. Kounchev, H. Render, The approximation order of
polysplines, Proc.\ Am.\ Math.\ Soc.\ 132 (2003), 455--461.

\bibitem {KR05}O.I. Kounchev, H. Render, Cardinal interpolation with
polysplines on annuli, J. Approx.\ Theory 137 (2005), 89--107.

\bibitem {L70}T.R. Lucas, A generalization of $L$-splines, Numer. Math. 15
(1970), 359--370.

\bibitem {Rab96}C. Rabut, Interpolation with radially symmetric thin plate
splines, J. Comput.\ Appl.\ Math.\ 73 (1996), 241--256.

\bibitem {SV67}M.H. Schultz \& R.S. Varga, $L$-splines, Numer. Math. 10
(1967), 345--369.

\bibitem {LLS07}L.L. Schumaker, Spline Functions: Basic Theory, third ed.,
Cambridge University Press, Cambridge, 2007.

\bibitem {SD12}N. Sharon, N. Dyn, Bivariate interpolation based on univariate
subdivision schemes, J. Approx.\ Theory 164 (2012), 709--730.

\bibitem {W05}H. Wendland, Scattered Data Approximation. Cambridge University
Press, Cambridge, 2005.

\end{thebibliography}
\end{document}